\documentclass[reqno]{amsart}

\usepackage{amssymb,amsmath,amsthm}
\usepackage{enumitem}
\usepackage[frame,cmtip,arrow,matrix,line,graph,curve]{xy}
\usepackage[capitalise]{cleveref}

\newtheorem{theorem}{Theorem}[section]
\newtheorem*{thmnonum}{Theorem}
\newtheorem{proposition}[theorem]{Proposition}
\newtheorem{lemma}[theorem]{Lemma}
\newtheorem{corollary}[theorem]{Corollary}
\newtheorem{conjecture}[theorem]{Conjecture}

\theoremstyle{definition}

\DeclareMathOperator{\addOp}{add}
\DeclareMathOperator{\Coker}{Coker}
\DeclareMathOperator{\dimv}{\underline{dim}}
\DeclareMathOperator{\End}{End}
\DeclareMathOperator{\Ext}{Ext}
\DeclareMathOperator{\Hom}{Hom}
\DeclareMathOperator{\Img}{Im}
\DeclareMathOperator{\ind}{ind}
\DeclareMathOperator{\Ker}{Ker}
\DeclareMathOperator{\SL}{SL}
\DeclareMathOperator{\Tr}{Tr}

\newcommand{\C}{\mathcal{C}}
\newcommand{\D}{\mathcal{D}}
\newcommand{\lto}{\longrightarrow}
\renewcommand{\phi}{\varphi}

\newcommand{\matmapv}[2]{\left[\begin{array}{c} #1 \\ #2 \end{array}\right]}
\newcommand{\matmaph}[2]{\left[\begin{array}{cc} #1 & #2 \end{array}\right]}
\newcommand{\smatmapv}[2]{\left[\begin{smallmatrix} #1 \\ #2 \end{smallmatrix}\right]}
\newcommand{\smatmaph}[2]{\left[\begin{smallmatrix} #1 & #2  \end{smallmatrix}\right]}
\newcommand{\SES}[5]{0\lto #1\stackrel{#2}{\lto}#3\stackrel{#4}{\lto}#5\lto 0}
\newcommand{\set}[1]{\left\{#1\right\}}
\newcommand{\add}[1]{\addOp\!\left(#1\right)}
\newcommand{\HomC}{\Hom_{\C}}

\title{Christoffel Words and Markoff Triples: An Algebraic Approach}
\author{Alex Lasnier}
\address{D\'epartement de math\'ematiques, Universit\'e de Sherbrooke, 2500 boul. de l'Universit\'e, Sherbrooke (Qu\'ebec), J1K 2R1, Canada.}
\email{alex.lasnier@usherbrooke.ca}

\keywords{Christoffel words, Markoff triples, uniqueness conjecture for Markoff numbers, representation theory}
\subjclass[2000]{16G20, 11D25, 68R15}

\begin{document}

\begin{abstract}
We introduce a family of modules, called Markoff modules, generated by a cluster-mutation-like iterative process. We show that these modules are combinatorially similar to Christoffel words. Furthermore, we construct a bijective map between the set of Markoff module triples and the set of proper Markoff triples. This allows us to interpret the uniqueness conjecture for Markoff numbers within an algebraic framework.
\end{abstract}

\maketitle


\section{Introduction}

A Markoff triple is a triple of positive integers $a,b,c$ satisfying the Diophantine equation
\[ a^2 + b^2 + c^2 = 3abc \]
The numbers appearing in these triples, called Markoff numbers,
were first studied by Markoff in his work on the minima of indefinite binary quadratic forms \cite{M},\cite{M2}.
He showed that every such triple can be generated by certain simple arithmetic rules starting from $(1,1,1)$, thereby building a $3$-regular tree composed of all solutions to the equation. The uniqueness conjecture for Markoff numbers, first stated by Frobenius \cite{Fro}, claims that every Markoff number appears uniquely as the largest element of a Markoff triple (up to permutation). 

Christoffel words were introduced by their namesake in \cite{Ch} and more recently revitalised with an in depth treatment by Borel and Laubie \cite{BL}. These are words in a two letter alphabet constructed by encoding the discretisation of certain line segments in $\mathbb{R}^2$.
In \cite{R}, Reutenauer constructs a bijective map associating a Markoff triple to every Christoffel word. Following Cohn \cite{C1}, he accomplishes this using the Fricke identities \cite{Fri}; a strategy that will also be of central importance to us.

In this paper, we introduce a family of string modules, called Markoff modules, generated in triples by an iterative process inspired by the mutation of tilting objects in a cluster category \cite{BMRRT} and analogous to the tree construction of Markoff triples. The latter similarity is found to be more than superficial by explicitly defining a bijection between the set of Markoff module triples and the set of proper Markoff triples, commuting with the structure maps of the respective trees. This map is defined on a Markoff module by $M(w)\mapsto \tfrac{1}{3}\Tr\phi(w)$ where $\phi(w)$ is a matrix in $\SL_2(\mathbb{Z})$ built from the string $w$ that defines the module $M(w)$. 
Our main results can be summarised as follows:
\begin{thmnonum}
Let $\mathcal{T}$ be the set of Markoff module triples and $\mathcal{M}$ the set of proper Markoff triples. The map $\Phi:\mathcal{T}\to\mathcal{M}$ defined by \[ \Phi(M(w_1),M(w_2),M(w_3)) = \left( \tfrac{1}{3}\Tr\phi(w_1),\, \tfrac{1}{3}\Tr\phi(w_2),\, \tfrac{1}{3}\Tr\phi(w_3) \right) \]
is a binary tree isomorphism. 
Moreover, the uniqueness conjecture for Markoff numbers is equivalent to the injectivity of the map $M(w)\mapsto \tfrac{1}{3}\Tr\phi(w)$ where $M(w)$ is a proper Markoff module.
\end{thmnonum}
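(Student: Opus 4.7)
The plan is to separate the theorem into the tree isomorphism statement and the reformulation of the conjecture, and to handle both through the Fricke trace identities that already underlie Cohn and Reutenauer's work.

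First I would verify that $\Phi$ is well defined: given a Markoff module triple $(M(w_1),M(w_2),M(w_3))$, the numbers $a_i=\tfrac{1}{3}\Tr\phi(w_i)$ should be positive integers satisfying $a_1^2+a_2^2+a_3^2=3a_1a_2a_3$. The strategy is to use the Fricke identity
\[ \Tr(A)^2+\Tr(B)^2+\Tr(AB)^2-\Tr(A)\Tr(B)\Tr(AB) = 2+\Tr(ABA^{-1}B^{-1}), \]
specialised to the matrices $\phi(w_i)$, together with the commutator condition $\Tr(\phi(w_j)\phi(w_k)\phi(w_j)^{-1}\phi(w_k)^{-1})=-2$, which must be established for the seed triple and then propagated inductively along mutations. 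Divisibility by $3$ and positivity should follow from an explicit calculation on the seed matrices plus the mutation induction. At the same time one checks the seed Markoff module triple maps to the seed Markoff triple $(1,1,1)$.

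Next I would show that $\Phi$ commutes with the mutation structure. If mutation in the position $i$ replaces $M(w_i)$ by the new module $M(w_i^\ast)$, the homomorphism property of $\phi$ (combined with the explicit combinatorial rule describing $w_i^\ast$ from $w_j,w_k,w_i$) should give
\[ \Tr\phi(w_i^\ast) = \Tr\phi(w_j)\Tr\phi(w_k) - \Tr\phi(w_i), \]
which is the trace identity $\Tr(XY)+\Tr(XY^{-1})=\Tr(X)\Tr(Y)$ in $\SL_2$ once one identifies the right factorisations. After dividing by $3$ this is precisely the Markoff mutation $a_i\mapsto 3a_ja_k-a_i$. Combined with the well-definedness and the seed check, induction on the depth in the binary tree gives that $\Phi$ is a morphism of rooted binary trees; bijectivity then follows from the fact that both trees are freely generated from their seeds by the binary mutation rule.

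For the second statement, since $\Phi$ is an isomorphism of the trees of triples, the componentwise map $\psi:M(w)\mapsto\tfrac{1}{3}\Tr\phi(w)$ sends the (multi)set of proper Markoff modules appearing as components of triples onto the set of Markoff numbers appearing as components of proper Markoff triples, and commutes with the distinguished ``largest entry'' position of each triple. A proper Markoff module thus appears as the distinguished component of exactly one triple in $\mathcal{T}$ if and only if its image under $\psi$ appears as the largest entry of exactly one triple in $\mathcal{M}$. Assuming $\psi$ is injective, two triples with the same largest Markoff number would pull back via $\Phi^{-1}$ to two triples with the same distinguished module, forcing the triples to coincide by the tree isomorphism; conversely, the uniqueness conjecture implies that two proper Markoff modules with the same image under $\psi$ must sit at the same position of the tree, hence are equal. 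This gives the equivalence.

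The step I expect to be the main obstacle is the mutation identity of Step~2: translating the categorical/combinatorial mutation rule for the string $w_i^\ast$ (which comes from the cluster-tilting analogue in the module category) into the precise $\SL_2(\mathbb{Z})$ factorisation needed to apply the trace identity, and simultaneously tracking that the commutator $\phi(w_j)\phi(w_k)\phi(w_j)^{-1}\phi(w_k)^{-1}$ continues to have trace $-2$ after mutation. Once this compatibility between the string concatenation rule defining Markoff modules and the matrix product in $\phi$ is secured, the rest of the argument is a straightforward induction on the tree.
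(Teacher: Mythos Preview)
Your overall strategy matches the paper's: use the Fricke identities, establish a multiplicative relation among the $\phi(w_i)$, check the seed, and deduce the tree homomorphism property. Two corrections are needed.

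First, the seed. In this paper $\mathcal{M}$ is the tree of \emph{proper} Markoff triples, rooted at $(1,5,2)$, not $(1,1,1)$; the initial Markoff module triple is chosen so that $\Phi$ sends it to $(1,5,2)$. This is harmless once you realign, but as written your seed check would fail and your positivity/properness induction would start from the wrong place.

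Second, and more substantively, there is a gap in the ``$\psi$ injective $\Rightarrow$ uniqueness conjecture'' direction. After pulling back two Markoff triples with equal maximum through $\Phi^{-1}$ and using injectivity of $\psi$, you obtain two Markoff module triples with the \emph{same middle module}; you then write that this forces the triples to coincide ``by the tree isomorphism''. But the tree isomorphism $\Phi$ by itself does not tell you that distinct vertices of $\mathcal{T}$ have distinct middle modules --- that is precisely the module-side analogue of the uniqueness conjecture, and it is not automatic. The paper handles this via an \emph{independent} binary tree isomorphism $F:\mathcal{T}\to\mathcal{C}$ to the tree of Christoffel triples, and then invokes the uniqueness of the standard factorisation of Christoffel words to conclude that a Markoff module triple is determined by its middle term. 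You need either to import that result or to supply some other argument that the middle Markoff module determines its triple; the isomorphism $\Phi$ alone does not do it.
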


\section{Preliminaries}

\subsection{Binary Trees} 
Even though binary trees have been extensively studied in mathematics and computer science, we provide an alternative definition better suited to our algebraic framework. For a set $X$, we denote by $X^*$ the free monoid generated by $X$.

A \emph{(complete, infinite, rooted) binary tree} is a triple $(T,L,R)$ where $T$ is a countably infinite set and $L,R$ are two injective maps $T\to T$ satisfying:
\begin{enumerate}[label=(\arabic{*})]
  \item There is a unique $r_T\in T$ such that $r_T\notin \Img L \cup \Img R$. This element is called the \emph{root}.
  \item No element of $T$ belongs to both $\Img L$ and $\Img R$.
  \item For every $x\in T$, there is a $f_x\in\set{L,R}^*$ such that $f_x(r_T)=x$.
\end{enumerate}

\begin{lemma}\label{btunique}
Let $(T,L,R)$ be a binary tree.
For every $x\in T$, the element $f_x\in\set{L,R}^*$ such that $f_x(r_T)=x$ is unique.
\end{lemma}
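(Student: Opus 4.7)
The plan is to prove uniqueness by induction on the total length $|f| + |g|$ of two candidate words $f, g \in \set{L,R}^*$ both satisfying $f(r_T) = g(r_T) = x$. The goal is to show $f = g$; it is convenient to interpret a word $s_1 s_2 \cdots s_n \in \set{L,R}^*$ as the composition in which $s_n$ is applied last, so that the value $s_1 s_2 \cdots s_n(r_T)$ lies in $\Img s_n$.

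For the base case $|f| + |g| = 0$, both words are empty and equal. If exactly one is empty, say $f$, then $x = r_T$ and $g$ is a nonempty word; but then $r_T = g(r_T)$ lies in $\Img L \cup \Img R$, violating property (1) of the binary tree definition. This handles the degenerate step. For the inductive step, assume both $f$ and $g$ are nonempty and write $f = f' s$ and $g = g' t$ with $s, t \in \set{L,R}$. Then $x \in \Img s$ and $x \in \Img t$ simultaneously, so by property (2) we must have $s = t$. Since $L$ and $R$ are injective, applying $s^{-1}$ to $x$ on both sides yields $f'(r_T) = g'(r_T)$; the inductive hypothesis then gives $f' = g'$, and appending the common letter yields $f = g$.

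I do not expect a serious obstacle here: the three binary tree axioms are essentially tailored to produce exactly this uniqueness. Property (3) supplies existence of $f_x$ (already built into the statement), property (1) rules out empty-versus-nonempty collisions at the root, and property (2) combined with injectivity of $L, R$ supplies the unique-decomposition step needed to run the induction. The only mild subtlety worth pinning down at the start is the reading convention for $\set{L,R}^*$ acting on $T$, so that ``the last letter of the word'' corresponds unambiguously to ``the outermost map applied.''
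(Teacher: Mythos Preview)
Your proof is correct and follows essentially the same approach as the paper: peel off the outermost letter using property~(2) to force $s=t$, then use injectivity of $L,R$ to reduce to shorter words, with property~(1) anchoring the base case at the root. The only cosmetic difference is that the paper organises the induction over elements of $T$ (assuming uniqueness at $x$ and deducing it at $L(x)$ and $R(x)$), whereas you induct directly on the total word length $|f|+|g|$; both schemes rest on the same three observations.
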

\begin{proof}
Since $r_T\notin \Img L \cup \Img R$, $f=\mathbb{I}_T$ is the unique element of $\set{L,R}^*$ such that $f(r_T)=r_T$.
Now, suppose that $f_x\in\set{L,R}^*$ is the unique function satisfying $f_x(r_T)=x$ for some $x\in T$.
Let $y=L(x)$, then there is a $f_y\in\set{L,R}^*$ such that $f_y(r_T)=y$.
Since $y\in\Img L$, we have $y\notin \Img R$ thus $f_y$ can only be expressed as $f_y=L\circ g$ with $g\in\set{L,R}^*$.
Hence $L\circ g(r_T) = y = L(x)$ and the injectivity of $L$ implies that $g(r_T)=x$.
By the inductive hypothesis, we get $g = f_x$. Therefore $f_y = L\circ f_x$ is the unique element of $\set{L,R}^*$ such that $f_y(r_T)=y$.
Similarly, one can show that $f = R\circ f_x$ is the unique function such that $f(r_T) = R(x)$.
\end{proof}

A \emph{binary tree homomorphism} between two binary trees $\phi:(T_1,L_1,R_1)\to (T_2,L_2,R_2)$ is given by a map $\phi:T_1\to T_2$ such that $\phi L_1 = L_2 \phi$ and $\phi R_1 = R_2 \phi$.
\[ \xymatrix{
T_1 \ar[r]^{\phi} \ar[d]_{L_1} & T_2 \ar[d]^{L_2}  & & T_1 \ar[r]^{\phi} \ar[d]_{R_1} & T_2 \ar[d]^{R_2} \\
T_1 \ar[r]^{\phi}              & T_2               & & T_1 \ar[r]^{\phi}              & T_2 \\
} \]

Let $(T_1,L_1,R_1)$ and $(T_2,L_2,R_2)$ be binary trees.
We define a monoid isomorphism $i: \set{L_1,R_1}^* \to \set{L_2,R_2}^*$ by $i(L_1)=L_2$ and $i(R_1)=R_2$.
\begin{lemma}
Let $\phi:(T_1,L_1,R_1)\to (T_2,L_2,R_2)$ be a binary tree homomorphism and $f\in\set{L_1,R_1}^*$. 
We have $\phi\circ f = i(f)\circ \phi$.
\end{lemma}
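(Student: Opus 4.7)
The plan is to prove this by induction on the length of $f$ as a word in the free monoid $\set{L_1,R_1}^*$. The homomorphism condition $\phi L_1 = L_2 \phi$ and $\phi R_1 = R_2 \phi$ gives the result directly for the generators, and the monoid structure lets us extend it to arbitrary words.

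For the base case, take $f=\mathbb{I}_{T_1}$ (the empty word, which is the identity of $\set{L_1,R_1}^*$). Then $i(f)=\mathbb{I}_{T_2}$, and $\phi\circ\mathbb{I}_{T_1} = \phi = \mathbb{I}_{T_2}\circ\phi$.

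For the inductive step, suppose the statement holds for every word of length $n$, and let $f\in\set{L_1,R_1}^*$ have length $n+1$. Write $f = h\circ g$ where $h\in\set{L_1,R_1}$ and $g$ has length $n$. Applying the inductive hypothesis to $g$, then the homomorphism condition on $h$, we get
\[ \phi\circ f = \phi\circ h\circ g = i(h)\circ\phi\circ g = i(h)\circ i(g)\circ\phi = i(h\circ g)\circ\phi = i(f)\circ\phi, \]
where the last equality uses that $i$ is a monoid homomorphism. This completes the induction.

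There is no real obstacle here; the argument is essentially the universal property of the free monoid, saying that the pair of commuting squares defining a binary tree homomorphism automatically extends to a commuting square for every word in $\set{L_1,R_1}^*$. The only thing to be careful about is the direction of composition when decomposing $f$, but either $f = h\circ g$ or $f = g\circ h$ with $h$ a single generator works equally well.
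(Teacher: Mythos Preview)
Your proof is correct and essentially identical to the paper's: both argue by induction on the length of $f$, with the same base case and the same inductive step of peeling off a leftmost generator and applying first the homomorphism condition, then the inductive hypothesis. The only cosmetic difference is that the paper treats the two cases $L_1\circ f$ and $R_1\circ f$ separately, whereas you handle them uniformly with $h\in\set{L_1,R_1}$.
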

\begin{proof}
The statement is clearly true when $f=\mathbb{I}_{T_1}$.
Assume that the equality holds for some $f\in\set{L_1,R_1}^*$.
We have
\[ \phi\circ (L_1\circ f) = L_2\circ (\phi\circ f) = L_2\circ i(f)\circ \phi = i(L_1)\circ i(f)\circ \phi = i(L_1\circ f)\circ \phi \]
and analogously $\phi\circ (R_1\circ f) = i(R_1\circ f)\circ \phi$.
\end{proof}

\begin{proposition}\label{btiso}
Let $\phi:(T_1,L_1,R_1)\to (T_2,L_2,R_2)$ be a binary tree homomorphism. Then $\phi$ is a bijection if and only if $\phi(\text{root } T_1) = \text{root } T_2$. In this case we say that $\phi$ is a \emph{binary tree isomorphism}.
\end{proposition}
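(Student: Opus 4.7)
The plan is to split the biconditional into its two implications. The forward direction is almost immediate by contradiction: suppose $\phi$ is bijective but $\phi(r_{T_1}) \ne r_{T_2}$. Property (1) applied to $T_2$ then forces $\phi(r_{T_1}) \in \Img L_2 \cup \Img R_2$, say $\phi(r_{T_1}) = L_2(y)$. Surjectivity of $\phi$ gives some $x\in T_1$ with $\phi(x) = y$, and the homomorphism identity yields $\phi(r_{T_1}) = L_2(\phi(x)) = \phi(L_1(x))$. Injectivity then forces $r_{T_1} = L_1(x)$, which contradicts property (1) for $T_1$. The $\Img R_2$ case is symmetric.

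The backward direction is where the two preceding lemmas do the real work. Assume $\phi(r_{T_1}) = r_{T_2}$. For surjectivity, given any $y\in T_2$, property (3) of $T_2$ furnishes some $f_y \in \set{L_2,R_2}^*$ with $f_y(r_{T_2}) = y$; setting $g = i^{-1}(f_y) \in \set{L_1,R_1}^*$ and invoking the previous lemma yields
\[ \phi(g(r_{T_1})) \;=\; i(g)(\phi(r_{T_1})) \;=\; f_y(r_{T_2}) \;=\; y. \]
For injectivity, if $\phi(x) = \phi(y)$, I would use property (3) of $T_1$ to write $x = f_x(r_{T_1})$ and $y = f_y(r_{T_1})$ for some $f_x,f_y\in\set{L_1,R_1}^*$. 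Applying the previous lemma a second time converts the equality $\phi(x)=\phi(y)$ into $i(f_x)(r_{T_2}) = i(f_y)(r_{T_2})$; then \Cref{btunique} applied inside $T_2$ gives $i(f_x) = i(f_y)$, and since $i$ is a monoid isomorphism, $f_x = f_y$, whence $x = y$.

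There is no genuinely hard step here: the machinery in \Cref{btunique} and the preceding transport lemma is tailored precisely for this application. The one subtlety worth flagging is that \Cref{btunique} is asymmetric --- it says that each element of a binary tree is reached by at most one word in the free monoid, not that a given word acts injectively --- so it must be applied to the common value $\phi(x) = \phi(y) \in T_2$ only after the transport step has moved the two words from $\set{L_1,R_1}^*$ into $\set{L_2,R_2}^*$, which is exactly where it slots in.
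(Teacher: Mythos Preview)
Your proposal is correct and follows essentially the same route as the paper: the backward direction is identical (transport words via $i$, invoke \cref{btunique} in $T_2$, then pull back through the monoid isomorphism), and the forward direction differs only cosmetically --- the paper takes the preimage of $r_{T_2}$ and shows it must be $r_{T_1}$ using surjectivity alone, whereas you start from $\phi(r_{T_1})$ and argue by contradiction using both injectivity and surjectivity. Both arguments are equally valid and of the same length.
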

\begin{proof}
Suppose that $\phi$ is a bijection. Then there exists $x\in T_1$ such that $\phi(x)=r_{T_2}$.
If $x$ was in $\Img L_1 \cup \Img R_1$ we would have, by the commutativity properties,
$r_{T_2} \in \Img L_2 \cup \Img R_2$; a contradiction. Thus $x\notin \Img L \cup \Img R$ which implies $x=r_{T_1}$.

Conversely, assume that $\phi(r_{T_1}) = r_{T_2}$. 
Let $y\in T_2$, then there is a $f_y\in\set{L_2,R_2}^*$ such that $f_y(r_{T_2}) = y$.
Let $x=f_x(r_{T_1})$ where $f_x$ is the unique element of $\set{L_1,R_1}^*$ such that $i(f_x)=f_y$.
We have
\[ \phi(x) = \phi(f_x(r_{T_1})) = i(f_x)(\phi(r_{T_1})) = f_y(r_{T_2}) = y \]
Hence $\phi$ is surjective.

Let $x,y\in T_1$ and suppose that $\phi(x)=\phi(y)$.
There exist $f_x,f_y\in\set{L_1,R_1}^*$ such that $f_x(r_{T_1}) = x$ and $f_y(r_{T_1}) = y$.
We get
\[ \phi(x) = \phi(f_x(r_{T_1})) = i(f_x)(\phi(r_{T_1})) = i(f_x)(r_{T_2})\]
and similarly $\phi(y) = i(f_y)(r_{T_2})$.
As a result $i(f_x)(r_{T_2}) = i(f_y)(r_{T_2})$ thus $i(f_x) = i(f_y)$ by \cref{btunique}.
Since $i$ is an isomorphism, we have $f_x = f_y$ and
\[ x = f_x(r_{T_1}) = f_y(r_{T_1}) = y \]
Therefore $\phi$ is injective.
\end{proof}

\subsection{Markoff Triples} 
A \emph{Markoff triple} is a solution $\set{a,b,c}$ in the positive integers of the equation
\[ x^2+y^2+z^2 = 3xyz \]
A Markoff triple is said to be \emph{proper} if $a,b$ and $c$ are distinct. Any integer appearing in a Markoff triple is called a \emph{Markoff number}.

We define two maps $\mathbb{Z}^3\to\mathbb{Z}^3$ by
\[ m_L : (a,b,c) \mapsto (b,3bc-a,c) \quad\text{and}\quad
   m_R : (a,b,c) \mapsto (a,3ab-c,b) \]
It is well-known and easy to verify that the image of a Markoff triple under $m_L$ or $m_R$ is once again a Markoff triple.

Let $\mathcal{M}$ be the set of triples $(a,b,c)$ generated by iterative applications of $m_L$ and $m_R$ starting from $(1,5,2)$. 
\[ \mathcal{M} = \set{(1,5,2)} \cup \set{ M \;\left|\; (1,5,2) \stackrel{m_1}{\lto}\cdots\stackrel{m_n}{\lto}M \text{ where } m_i = m_L \text{ or } m_i = m_R \right.}\]
We can then view $m_L$ and $m_R$ as maps $\mathcal{M}\to\mathcal{M}$. By induction it is easy to see that every $(a,b,c)\in\mathcal{M}$ satisfies $a<b$, $c<b$ and $a\neq c$.
By adapting a classical result due to Markoff~\cite{M2}, one can show that $\mathcal{M}$ coincides with the set of proper Markoff triples.
Furthermore, we define a third function $m_C:\mathcal{M}\backslash\{(1,5,2)\}\to\mathcal{M}$ by
\[ m_C : (a,b,c) \mapsto \left\{ \begin{array}{ll}  (3ac-b,a,c) & \mbox{if $a>c$} \\
                                                    (a,c,3ac-b) & \mbox{if $a<c$} \end{array} \right. \]
A straightforward calculation shows that $m_Cm_L = \mathbb{I}_{\mathcal{M}}$ and $m_Cm_R = \mathbb{I}_{\mathcal{M}}$.
Thus $m_L$ and $m_R$ are injective.
\begin{proposition}
The triple $(\mathcal{M}, m_L, m_R)$ is a binary tree rooted in $(1,5,2)$.
\end{proposition}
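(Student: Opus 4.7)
The plan is to verify that $\mathcal{M}$ is countably infinite and that axioms (1)--(3) of a binary tree hold for $(\mathcal{M}, m_L, m_R)$; the injectivity of $m_L$ and $m_R$ is already in hand via the left inverse $m_C$. Countability is immediate from the inductive definition of $\mathcal{M}$ as a set indexed by finite words in $\set{m_L,m_R}^*$, and infiniteness follows by iterating $m_L$ on $(1,5,2)$: the third coordinate is preserved while the middle coordinate is $3bc-a > 3bc - b = b(3c-1)$, which grows without bound, so infinitely many distinct triples are produced. Axiom (3), reachability from $(1,5,2)$ by an element of $\set{m_L,m_R}^*$, is built directly into the definition of $\mathcal{M}$.

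For axiom (1), I would first observe that any element of $\mathcal{M}$ other than $(1,5,2)$ is produced by a nonempty composition of $m_L$ and $m_R$ and hence lies in $\Img m_L \cup \Img m_R$. It then remains to check that $(1,5,2)$ itself is not in this union, which I would do by direct inspection: the equation $m_L(a,b,c)=(b,3bc-a,c)=(1,5,2)$ forces $b=1$, $c=2$, and therefore $a=1$, but the candidate $(1,1,2)$ is not in $\mathcal{M}$ because it fails the inductive inequality $a<b$; the case of $m_R$ is symmetric, yielding the candidate $(1,2,1)$ which fails $a\neq c$.

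Axiom (2) is the one step requiring a genuine calculation. I would suppose $m_L(a,b,c) = m_R(a',b',c')$ with both source triples in $\mathcal{M}$; comparing first and third coordinates gives $a'=b$ and $b'=c$, after which the middle coordinate forces $c'=a$, so $(a',b',c')=(b,c,a)$. The inductive inequality $c<b$ valid for $(a,b,c)\in\mathcal{M}$ then contradicts the inequality $b<c$ required for $(b,c,a)\in\mathcal{M}$. This is really the heart of the proposition: the asymmetry in the coordinate ordering of elements of $\mathcal{M}$, established by the excerpt's induction, is precisely what forces $m_L$ and $m_R$ to generate disjoint branches. Everything else is unpacking of the definition of $\mathcal{M}$, so this image-disjointness argument is the main obstacle I would expect to work through carefully.
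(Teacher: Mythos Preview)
Your proof is correct and follows essentially the same approach as the paper. The only cosmetic difference is in axiom~(2): rather than assuming a common element of $\Img m_L$ and $\Img m_R$ and deriving a contradiction, the paper simply observes that any $(a,b,c)\in\Img m_L$ satisfies $a>c$ (since $a$ and $c$ are the former middle and third coordinates) while any $(a,b,c)\in\Img m_R$ satisfies $a<c$, making disjointness immediate---but this is the same idea as yours, just phrased as a dichotomy.
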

\begin{proof}
It is easy to see that there is no proper Markoff triple $M$ such that $m_L(M)=(1,5,2)$ or $m_R(M)=(1,5,2)$. 
Moreover, if $(a,b,c)\in\Img m_L$, then $a>c$ and if $(a,b,c)\in\Img m_R$, then $a<c$. Thus $\Img m_L \cap \Img m_R = \emptyset$.
By the definition of $\mathcal{M}$, it is then clear that $(\mathcal{M}, m_L, m_R)$ is a binary tree rooted in $(1,5,2)$.
\end{proof}

Much of the work relating to Markoff numbers has been motivated by the uniqueness conjecture first formulated by Frobenius in 1913.
\begin{conjecture}[Frobenius \cite{Fro}]
Every Markoff triple is uniquely determined by its largest term.
\end{conjecture}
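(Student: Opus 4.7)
The statement is Frobenius's 1913 conjecture, which remains open after more than a century; accordingly what follows is not a plan for a direct resolution but for the structural reformulation pursued in this paper, which recasts the conjecture as an injectivity question for an algebraically defined invariant.

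The first step would be to exploit the binary tree structure on $(\mathcal{M}, m_L, m_R)$ just established. By \cref{btunique}, every proper Markoff triple is obtained from $(1,5,2)$ by a unique word in $\set{m_L,m_R}^*$, so the entire set of proper Markoff triples is rigidly indexed by the free monoid. If one can build, on the module side, a parallel binary tree of ``Markoff module triples'' together with an algebraically defined numerical invariant $\tau$ that commutes with the tree maps, then two Markoff triples sharing a largest term would manifest as distinct tree nodes assigning the same value of $\tau$, and the conjecture would collapse to injectivity of $\tau$ on proper Markoff modules.

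The second step is to produce such a $\tau$ explicitly. Following Cohn and Reutenauer, I would associate to each defining string $w$ a matrix $\phi(w) \in \SL_2(\mathbb{Z})$ and set $\tau(w) = \tfrac{1}{3}\Tr\phi(w)$; the Fricke identities ensure that this trace satisfies exactly the Markoff-type recursion governing $m_L$ and $m_R$. Combining this intertwining with the root-to-root criterion of \cref{btiso} would force the correspondence $\Phi$ announced in the main theorem to be a binary tree isomorphism, so that the middle entry of each Markoff module triple recovers, via $\tau$, the largest entry of the corresponding Markoff triple. Uniqueness of the maximum in a Markoff triple would then translate directly to the statement that no two proper Markoff modules $M(w), M(w')$ with distinct defining strings yield the same trace.

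The main obstacle is precisely this final injectivity, which is logically equivalent to the original conjecture and therefore cannot be resolved by the bijective reformulation alone. The approach is thus one of translation rather than resolution: the combinatorics of Christoffel words and the homological structure of string modules become the arena in which the conjecture is posed, with the prospect that representation-theoretic tools, such as the rigidity of endomorphism rings and cluster-tilting combinatorics, may eventually supply the additional structure needed to separate the trace values of distinct proper Markoff modules.
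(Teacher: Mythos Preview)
Your proposal is correct in spirit and matches the paper's approach: the conjecture is open, the paper offers no proof of it, and the content you outline---constructing the binary tree $(\mathcal{T},\mu_L,\mu_R)$ of Markoff module triples, defining $\phi$ via Cohn-type $\SL_2(\mathbb{Z})$ matrices, invoking the Fricke identities to show $\Phi$ intertwines the tree maps, and then applying \cref{btiso}---is exactly the route the paper takes to its final Corollary, which states the equivalence of the conjecture with injectivity of $M(w)\mapsto\tfrac{1}{3}\Tr\phi(w)$ on proper Markoff modules. Your acknowledgment that this is a translation rather than a resolution is precisely the paper's stance.
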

This conjecture is known to hold in several special cases. See, for instance, \cite{Ba}, \cite{BRS}, \cite{Bu}, \cite{Z}.

\subsection{Christoffel Words} 
A \emph{lattice path} is a path in $\mathbb{R}^2$ composed of consecutive line segments of the form $[(a,b),(a+1,b)]$ or $[(a,b),(a,b+1)]$ with $a,b\in\mathbb{Z}$. 

Let $p$ and $q$ be relatively prime non-negative integers. The \emph{Christoffel path of slope $q/p$} is the lattice path from $(0,0)$ to $(p,q)$ lying weakly below the line segment $[(0,0),(p,q)]$ such that the region bounded by these two paths contains no points of $\mathbb{Z}\times\mathbb{Z}$.

The \emph{Christoffel word of slope $q/p$} is the word $C(p,q) \in \set{x,y}^*$ obtained by following the Christoffel path of slope $q/p$ starting from $(0,0)$ and encoding each segment of the form $[(a,b),(a+1,b)]$ by $x$ and each segment of the form $[(a,b),(a,b+1)]$ by $y$. All Christoffel words different from $x$ and $y$ are called \emph{proper}.

Borel and Laubie \cite{BL} showed that every proper Christoffel word can be uniquely expressed as the concatenation of two Christoffel words; this is called the \emph{standard factorisation}. Given a Christoffel word $C(p,q)$, there is a unique integer point $(c,d)$ on the corresponding Christoffel path having minimum nonzero distance to the line segment $[(0,0),(p,q)]$. The standard factorisation of $C(p,q)$ is then given by $C(p,q) = C(c,d)C(p-c,q-d)$.

\begin{proposition}[{\cite[Proposition 1]{BL}}]\label{cwcon}
The concatenation $C(p_1,q_1)C(p_2,q_2)$ is a Christoffel word if and only if $\det \left[\begin{smallmatrix} p_1&q_1\\ p_2&q_2 \end{smallmatrix}\right] = 1$.
\end{proposition}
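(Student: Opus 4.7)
The plan is to base both directions on the algebraic identity
\[ p_1 q_2 - q_1 p_2 = p_1(q_1+q_2) - q_1(p_1+p_2) = p_1 q - q_1 p, \]
where $p = p_1+p_2$ and $q = q_1+q_2$; up to the factor $\sqrt{p^2+q^2}$, the right-hand side is the signed perpendicular distance from $(p_1,q_1)$ to the segment $[(0,0),(p,q)]$, with sign recording which side of the segment $(p_1,q_1)$ lies on. The determinant condition is therefore equivalent to $(p_1,q_1)$ being a lattice point at the tightest possible nonzero distance below that segment.

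For the forward direction I would argue as follows. Suppose $C(p_1,q_1)C(p_2,q_2)$ is a Christoffel word. Concatenation of lattice paths adds endpoints, so this word must equal $C(p,q)$, and the statement displays a factorisation of $C(p,q)$ into two Christoffel words. By the uniqueness of the standard factorisation recalled just before the proposition, $(p_1,q_1)$ coincides with the distinguished integer point $(c,d)$ on the Christoffel path of slope $q/p$ at minimum nonzero distance to $[(0,0),(p,q)]$. Since the Christoffel path lies weakly below the segment, the signed quantity $p_1 q - q_1 p$ is non-negative. Because $\gcd(p,q)=1$, B\'ezout's identity yields an integer solution to $qa-pb=1$ lying in the strip immediately below the segment, and no such lattice point can be strictly between path and segment; hence such a solution must lie on the Christoffel path itself. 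Minimality then forces $p_1 q - q_1 p = 1$, which is exactly the determinant.

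For the converse, assume $p_1 q_2 - q_1 p_2 = 1$. First, any common divisor of $p$ and $q$ divides $p q_2 - q p_2 = p_1 q_2 - q_1 p_2 = 1$, so $\gcd(p,q)=1$ and $C(p,q)$ is defined. I would then verify that the concatenated path $P = P_1 \cdot P_2$ satisfies the defining properties of the Christoffel path of slope $q/p$: it runs from $(0,0)$ to $(p,q)$; positivity of the determinant gives the slope comparison $q_1/p_1 < q/p < q_2/p_2$, from which each subpath lies below the main segment (directly for $P_1$, and after translation for $P_2$); and the region between $P$ and the main segment decomposes as the triangle with vertices $(0,0)$, $(p_1,q_1)$, $(p,q)$ together with the two subregions between the $P_i$ and their own sublines. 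The subregions contain no lattice points by the Christoffel property of each factor, and the triangle has area $\tfrac{1}{2}|p_1 q - q_1 p| = \tfrac{1}{2}$, so by Pick's theorem it contains no lattice points beyond its vertices. Thus $P$ is the Christoffel path of slope $q/p$, and the concatenation equals $C(p,q)$.

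The main obstacle is the geometric step in the forward direction: extracting the exact value $p_1 q - q_1 p = 1$ from the fact that $(p_1,q_1)$ is a minimum-distance path point. This requires both B\'ezout to produce a lattice point at distance $1/\sqrt{p^2+q^2}$ from the segment and the Christoffel defining property to place that lattice point on the path. Once this lemma is established, the rest is routine signed-area bookkeeping.
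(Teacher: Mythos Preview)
The paper does not actually prove this proposition: it is stated with the attribution \cite[Proposition 1]{BL} and no argument is given. There is therefore no ``paper's own proof'' to compare your proposal against; the result is imported from Borel--Laubie as a black box.

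Assessed on its own terms, your sketch is sound. For the forward direction you correctly identify the concatenation as $C(p,q)$ with $p=p_1+p_2$, $q=q_1+q_2$ by counting letters, and then invoke the uniqueness of the standard factorisation (stated in the paper just before the proposition) to place $(p_1,q_1)$ at the minimum-distance lattice point. The B\'ezout argument then pins the value: some lattice point below the segment has $|pa-qb|=1$, such a point must lie on the Christoffel path by the defining ``no interior lattice points'' property, and since $p_1q-q_1p$ is a positive integer bounded above by this minimum, it equals~$1$. For the converse, decomposing the region between the concatenated path and the chord into the two sub-Christoffel regions plus a triangle of area $\tfrac{1}{2}$, and applying Pick's theorem to the triangle, is a clean and correct way to verify the Christoffel property. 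One small caveat: your forward direction leans on the uniqueness of the standard factorisation, which in \cite{BL} is itself intertwined with this proposition, so if you were writing this up self-containedly you would want to check that no circularity is introduced; within the paper's own logical ordering, however, uniqueness is stated first and your use of it is legitimate.
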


It is often convenient to use the tree construction of Christoffel words (see \cite{BLRS}, \cite{BdL},\cite{BL}).
Let $\mathcal{C}$ be the set of triples $(w_1,w_2,w_3)$ where $w_2$ is a Christoffel word and $w_2= w_1w_3$ is the standard factorisation of $w_2$.
An element of $\mathcal{C}$ will be called a \emph{Christoffel triple}. 
We define two maps $c_L, c_R: \mathcal{C} \to \mathcal{C}$ by $c_L(w_1,w_2,w_3) = (w_2, w_2w_3, w_3)$ and $c_R(w_1,w_2,w_3) = (w_1, w_1w_2, w_2)$. Then $(\mathcal{C}, c_L, c_R)$ is a binary tree with root $(x,xy,y)$. 

A comprehensive overview of the theory of Christoffel words can be found in \cite{BLRS}.

\subsection{String Modules} 
We recall some basic facts about string modules; we refer to \cite{BR} for further details.

Let $Q=(Q_0,Q_1)$ be a quiver with $Q_0$ the set of vertices and $Q_1$ the set of arrows.
For an arrow $\alpha:i\to j$, let $s(\alpha)=i$ denote its source and $t(\alpha)=j$ its target.
Given $\alpha \in Q_1$, we denote by $\alpha^{-1}$ its formal inverse where $s(\alpha^{-1}) = t(\alpha)$ and $t(\alpha^{-1}) = s(\alpha)$.
We use $Q_1^{-1}$ to represent the set of formal inverses of arrows in $Q_1$. 
Note that $(\alpha^{-1})^{-1} = \alpha$.

A \emph{string} of length $n\geq 1$ for a bound quiver $(Q,I)$ is a sequence $w=a_1a_2\cdots a_n$ of elements $a_i\in Q_1 \cup Q_1^{-1}$ 
satisfying:
\begin{enumerate}[label=(\arabic{*})]
  \item $t(a_i) = s(a_{i+1})$ for $1\leq i \leq n-1$.
  \item $a_i \neq a_{i+1}^{-1}$ for $1\leq i \leq n-1$.
  \item For every subsequence $v$ of $w$, neither $v$ nor $v^{-1}$ is contained in $I$.
\end{enumerate}
Additionally, for every vertex $i\in Q_0$, we define a string $\varepsilon_i$ of length 0 such that $s(\varepsilon_i)=t(\varepsilon_i)=i$.

Let $k$ be an algebraically closed field. A $k$-algebra $A=kQ/I$ is a \emph{string algebra} if the following hold:
\begin{enumerate}[label=(\arabic{*})]
  \item The ideal $I$ is generated by monomial relations.
  \item Each vertex of $Q$ is the source of at most two arrows and the target of at most two arrows.
  \item For every arrow $\beta$, there is at most one arrow $\alpha$ such that $\alpha\beta \notin I$ and at most one arrow $\gamma$ such that 
  $\beta\gamma \notin I$.
\end{enumerate}
Let $A=kQ/I$ be a string algebra. For a given string $w$ it is possible to construct an indecomposable $A$-module $M(w)$ called a \emph{string module} (see \cite{BR}). 

By a result due to Crawley-Boevey \cite{CB} we can explicitly describe a basis for $\Hom_A(M(w_1),M(w_2))$ where $w_1$ and $w_2$ are strings.
A string $v$ is a \emph{factor string} of $w$ if $w=xvy$ where $x,y$ are strings such that $x$ ends with an inverse arrow or is of length 0 and $y$ starts with an arrow or is of length 0.
A string $v$ is a \emph{substring} of $w$ if $w=xvy$ where $x,y$ are strings such that $x$ ends with an arrow or is of length 0 and $y$ starts with an inverse arrow or is of length 0.
For two strings $w_1$ and $w_2$, a pair $a = ((x_1,v_1,y_1),(x_2,v_2,y_2))$ is \emph{admissible} if $w_1=x_1v_1y_1$ where $v_1$ is a factor string of $w_1$, $w_2=x_2v_2y_2$ where $v_2$ is a substring of $w_2$ and $v_1=v_2$ or $v_1=v_2^{-1}$.
For each admissible pair one can define a morphism $f_a : M(w_1)\to M(w_2)$. Moreover, the set of such maps forms a basis of $\Hom_A(M(w_1),M(w_2))$.

In the special case where $w_2$ is a factor string of $w_1$, we will simply write $w_1 \twoheadrightarrow w_2$ instead of $f_a$ where $a=((x,w_2,y),(\varepsilon_i,w_2,\varepsilon_j))$ when there is no risk of confusion.

\section{Exact Sequences with minimal approximations}

In this section we will study the properties of short exact sequences having a minimal approximation as one of its morphisms.

Let $\D$ be an additive subcategory of an additive category $\C$.
Recall that a morphism $g:X\to Y$ is called \emph{right minimal} if every endomorphism $h:X\to X$ such that $g=gh$ is an automorphism.
Let $Y$ be an object in $\C$. A morphism $g:X\to Y$ with $X$ in $\D$ is a \emph{right $\D$-approximation} if the induced map
$g_* = \HomC(Z,g):\HomC(Z,X) \to \HomC(Z,Y)$ is an epimorphism for every object $Z$ in $\D$.
Note that $g$ is a right $\D$-approximation if and only if every map in $\HomC(Z,Y)$ factors through $g$, for every $Z \in \D$.
Left minimal morphisms and left $\D$-approximations are defined dually.

\begin{lemma}[{\cite[Corollary 1.4]{KS}}]\label{mindec}
Let $f:X\to Y$ be a morphism in a Krull-Schmidt category.
\begin{enumerate}
  \item There is a decomposition $f = \smatmapv{f_1}{f_2} : X \to Y_1 \oplus Y_2 = Y$ such that $f_1$ is left minimal and $f_2 = 0$.
  \item There is a decomposition $f = \smatmaph{f_1}{f_2} : X=X_1\oplus X_2 \to Y$ such that $f_1$ is right minimal and $f_2 = 0$.
\end{enumerate}
\end{lemma}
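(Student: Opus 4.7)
The plan is to prove (1) directly, then to deduce (2) by applying (1) in the opposite category $\C^{\mathrm{op}}$, which is again Krull--Schmidt. Throughout, the point is that $Y$ decomposes as a \emph{finite} direct sum of indecomposables with local endomorphism rings, so $\End(Y)$ (and the endomorphism ring of any direct summand of $Y$) is semi-perfect.

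For (1), consider the non-empty set of direct-sum decompositions $Y = Y_1 \oplus Y_2$ for which $f$ has the form $\smatmapv{f_1}{0}$. Since $Y$ has only finitely many indecomposable summands, I can choose such a decomposition with the number of indecomposable summands of $Y_2$ as large as possible. I claim that the associated $f_1 : X \to Y_1$ is then left minimal; the main task is to verify this claim.

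To do so I argue by contradiction. Suppose $h : Y_1 \to Y_1$ satisfies $hf_1 = f_1$ but $h$ is not an automorphism, and set $g = 1_{Y_1} - h$, so that $gf_1 = 0$. The goal is to exhibit a non-zero direct summand $A$ of $Y_1$ on which $f_1$ vanishes, which would refine $Y_2$ to $A \oplus Y_2$ and contradict the maximality. Because $h$ is not a unit, $g$ cannot lie in the Jacobson radical of $\End(Y_1)$ (otherwise $h = 1 - g$ would be a unit). Applying Fitting's lemma for endomorphisms of an object whose endomorphism ring is semi-perfect, I obtain a decomposition $Y_1 = A \oplus B$ with $A \ne 0$, with $A$ and $B$ both $g$-stable, $g|_A$ an automorphism of $A$, and $g|_B$ in $\mathrm{rad}(\End(B))$. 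Writing $f_1 = \smatmapv{f_A}{f_B}$ accordingly, the equation $gf_1 = 0$ forces $(g|_A) f_A = 0$, hence $f_A = 0$. This yields the promised refinement and the desired contradiction.

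The principal obstacle is the Fitting-type splitting of $g$: producing an orthogonal pair of idempotents in $\End(Y_1)$ separating an ``invertible part'' of $g$ from a ``radical part.'' This rests on the semi-perfectness of $\End(Y_1)$ combined with the lifting of idempotents modulo the radical; once this technical input is accepted, the bookkeeping is routine. Part (2) is then immediate from (1) applied in $\C^{\mathrm{op}}$: a decomposition $X = X_1 \oplus X_2$ with $f = \smatmaph{f_1}{0}$ and $f_1$ right minimal in $\C$ corresponds exactly to a $Y$-shaped decomposition of $f^{\mathrm{op}}$ with a left minimal first component in $\C^{\mathrm{op}}$.
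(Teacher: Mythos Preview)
The paper does not prove this lemma; it is quoted from \cite{KS} without argument, so there is no in-paper proof to compare against. I will therefore assess your sketch on its own terms.

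Your overall plan---maximise the number of indecomposable summands of $Y_2$ among decompositions $Y=Y_1\oplus Y_2$ with $f=\smatmapv{f_1}{0}$, then argue by contradiction that $f_1$ is left minimal, and dualise for (b)---is the standard one and is sound. The problematic step is the ``Fitting-type splitting'' you invoke. In the form you state it (a $g$-stable decomposition $Y_1=A\oplus B$ with $A\neq 0$, $g|_A$ an automorphism, and $g|_B\in\mathrm{rad}(\End B)$) it is false already for $\End(Y_1)=M_3(k)$: take $g=\left[\begin{smallmatrix}1&0&0\\0&0&1\\0&0&0\end{smallmatrix}\right]$, so that $h=1-g$ is not a unit, yet the only $g$-stable splitting with $g|_A$ invertible has $A=\langle e_1\rangle$, $B=\langle e_2,e_3\rangle$, and $g|_B=\left[\begin{smallmatrix}0&1\\0&0\end{smallmatrix}\right]\notin\mathrm{rad}(M_2(k))=0$. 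Your argument actually only uses the weaker assertion that some nonzero $g$-stable summand $A$ with $g|_A$ invertible exists; this is more plausible, but it is not a standard form of Fitting's lemma for semi-perfect rings and would itself need proof (idempotent lifting does not in general produce an idempotent \emph{commuting with $g$}).

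A cleaner fix avoids $g$-stability. Since $\bar h$ is a non-unit in the semisimple ring $\bar R=R/\mathrm{rad}(R)$, there is a nonzero idempotent $\bar e$ with $\bar h\bar e=0$; lift it to an idempotent $e\in R$, so $ehe\in e\,\mathrm{rad}(R)\,e=\mathrm{rad}(eRe)$. With $A=eY_1$ and $B=(1-e)Y_1$, the $A$-row of $hf_1=f_1$ reads $(1_A-h_{AA})f_A=h_{AB}f_B$, where $1_A-h_{AA}$ is a unit in $\End(A)$; hence $f_A$ factors through $f_B$, and the triangular automorphism $\left[\begin{smallmatrix}1_A&-(1_A-h_{AA})^{-1}h_{AB}\\0&1_B\end{smallmatrix}\right]$ of $Y_1$ produces a new decomposition in which the $A$-component of $f_1$ vanishes, contradicting the maximality of $Y_2$.
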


Throughout this section $\D$ will denote an additive subcategory of an abelian Krull-Schmidt category $\C$ such that
$\Ext^1_{\C}(\D,\D) = 0$. We fix an indecomposable object $M$ in $\D$ and set
$\overline{\D} = \add{\ind \D \backslash \set{M}}$.

The following result is a direct adaptation of Lemmas~6.4 and 6.6 of~\cite{BMRRT}.
\begin{lemma}\label{sesR1}
If
\begin{equation}
\SES{M'}{f}{E}{g}{M}\label{ses}
\end{equation}
is a short exact sequence in $\C$ with $g$ a minimal right $\overline{\D}$-approximation, then
\begin{enumerate}[ref=\cref{sesR1}(\alph{*})]
  \item The morphism $f$ is a minimal left $\overline{\D}$-approximation. \label{fmin}
  \item The object $M'$ is not in $\D$.                                   \label{m'notinD}
  \item $\Ext^1_{\C}(\overline{\D}, M')=0$.                               \label{extdm'}
  \item If $\Ext^1_{\C}(M',E)=0$ then $\Ext^1_{\C}(M',M')=0$.             \label{extm'm'}
\end{enumerate}
\end{lemma}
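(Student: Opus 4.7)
My plan is to apply $\HomC$ into or out of the short exact sequence and chase the resulting long exact sequence, using $\Ext^1_{\C}(\D,\D) = 0$, the approximation property of $g$, and the fact (built into the definition of a right $\overline{\D}$-approximation) that $E \in \overline{\D}$.

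For (c), I would apply $\HomC(N, -)$ with $N \in \overline{\D}$: the connecting map is trapped between the surjection $g_{*}$ (from the approximation property) and the zero group $\Ext^1_{\C}(N, E)$ (since $N, E \in \D$), forcing $\Ext^1_{\C}(N, M') = 0$. For (b), if $M'$ lay in $\D$ the sequence would split, giving $M$ as a direct summand of $E$ — incompatible with $E \in \overline{\D}$ by Krull-Schmidt.

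For (a), applying $\HomC(-, N)$ yields the left approximation property from $\Ext^1_{\C}(M, N) = 0$. For minimality, if $hf = f$ then $(h - 1_E)f = 0$ lets me write $h = 1_E + \beta g$ for some $\beta : M \to E$, so that the induced endomorphism on $M = \Coker f$ is $1_M + g\beta$. Since $M$ is indecomposable, $\End(M)$ is local: either $g\beta$ is a unit, in which case $\beta(g\beta)^{-1}$ splits the sequence and exhibits $M$ as a summand of $E$ (contradiction), or $g\beta$ lies in the maximal ideal and $1_M + g\beta$ is an automorphism — the five-lemma then promotes this to $h$ being an automorphism.

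For (d), I would reduce to showing that every $\alpha : M' \to M$ lifts through $g$: once this is known, the connecting map $\HomC(M', M) \to \Ext^1_{\C}(M', M')$ in the long exact sequence from $\HomC(M', -)$ vanishes, and the next term $\Ext^1_{\C}(M', E) = 0$ by hypothesis forces $\Ext^1_{\C}(M', M') = 0$. To produce the lift, I would form the pushout of $f$ along $\alpha$; the resulting sequence $0 \to M \to P \to M \to 0$ lies in $\Ext^1_{\C}(M, M) = 0$ and splits, and a retraction composed with the pushout map produces an extension $\tilde\alpha : E \to M$ with $\tilde\alpha f = \alpha$. Since $g$ is a right $\overline{\D}$-approximation and $E \in \overline{\D}$, I can then factor $\tilde\alpha = g\sigma$ for some $\sigma : E \to E$, and $\sigma f$ is the sought lift. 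The main obstacle here is spotting this pushout-then-factor detour: by (b), $M'$ is not itself in $\overline{\D}$, so the approximation property of $g$ cannot be applied to $\alpha$ directly, and forming the pushout along $\alpha$ is the device that converts the problem into one living entirely in $\overline{\D}$.
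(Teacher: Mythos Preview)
Your proposal is correct. Parts (b), (c), and the left-approximation half of (a) match the paper exactly.

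The two points where you diverge are both sound but worth noting. For minimality in (a), the paper invokes the Krause--Saor\'{i}n decomposition (\cref{mindec}): if $f$ were not minimal it would split off a zero component $f_2 : M' \to E_2$, forcing $E_2$ to be a nonzero summand of the indecomposable $M$, contradicting $E_2 \in \overline{\D}$. Your direct argument via the local ring $\End_{\C}(M)$ and the five-lemma is equally valid and avoids the external lemma, at the cost of a slightly longer computation. For (d), the paper produces your extension $\tilde\alpha$ in one line by applying $\HomC(-,M)$ to the sequence and reading off that $f^* : \HomC(E,M) \to \HomC(M',M)$ is surjective because $\Ext^1_{\C}(M,M)=0$; your pushout-then-split construction is precisely the element-level realization of this surjectivity, so the two arguments are the same in content, with the long-exact-sequence phrasing being slightly more economical. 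After that step your factorization $\tilde\alpha = g\sigma$ and conclusion $\alpha = g(\sigma f)$ coincide verbatim with the paper.
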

\begin{proof}\mbox{}
\begin{enumerate}
  \item Let $X \in \overline{\D}$. Applying the functor $\HomC(-,X)$ to (\ref{ses})
        we get the exact sequence
        \[ \HomC(E,X)\stackrel{f^*}{\lto}\HomC(M',X) \lto \Ext^1_{\C}(M,X) \]
        We have $\Ext^1_{\C}(M,X) = 0$ since $M, X \in \D$.
        Thus, $f^*$ is an epimorphism.

        Now suppose that $f$ is not minimal. By \cref{mindec}, there is a non-trivial decomposition
        $f =\smatmapv{f_1}{f_2} : M' \to E_1 \oplus E_2$ with $f_1$ minimal and $f_2 = 0$.
        Hence $E_2$ is isomorphic to a direct summand of $M$. But $M$ indecomposable implies that
        $M\cong E_2$, contradicting the fact that $E_2 \in \overline{\D}$.

  \item Suppose that $M' \in \D$. We then have $\Ext^1_{\C}(M,M') = 0$ and it follows that the exact sequence 
        (\ref{ses}) splits. Thus $E \cong M'\oplus M$, but $E\in \overline{\D}$ implies that $M\in \overline{\D}$.
 
  \item Let $X \in \overline{\D}$. Applying $\HomC(X,-)$ to (\ref{ses}), we get the exact sequence
        \[ \HomC(X,E)\stackrel{g_*}{\lto} \HomC(X,M) \lto \Ext^1_{\C}(X,M') \lto \Ext^1_{\C}(X,E) \]
        where $\Ext^1_{\mathcal{C}}(X,E) = 0$ since $X,E \in \overline{\D} \subseteq \D$.
        Moreover, $g$ is a right $\overline{\D}$-approximation and so the induced morphism $g_*$ is an epimorphism.
        Hence $\Ext^1_{\C}(X, M')=0$
  
  \item By applying $\HomC(M',-)$ to (\ref{ses}) we get the exact sequence
        \[ \HomC(M',E)\stackrel{g_*}{\lto}\HomC(M',M) \lto \Ext^1_{\C}(M',M') \lto 0 \]
        To prove that $\Ext^1_{\C}(M', M')=0$ it suffices to show that $g_*$ is an epimorphism.
        Let $h \in \HomC(M',M)$. We will show that $h$ factors through $g$.
        We have an exact sequence 
        \[ \HomC(E,M)\stackrel{f^*}{\lto}\HomC(M',M) \lto \Ext^1_{\C}(M,M) \]
        Thus $f^*$ is an epimorphism since $\Ext^1_{\C}(M,M)=0$. So there exists
        $t \in \HomC(E,M)$ such that $h=tf$. Since $g$ is a right $\overline{\D}$-approximation,
        there exists $s\in \HomC(E,E)$ such that $t=gs$. Therefore $h=tf=g(sf)$.
\end{enumerate}
\end{proof}

We will now consider the subcategory $\D' = \add{\set{M'} \cup \overline{\D}}$
of $\C$ obtained from $\D$ by ``replacing'' $M$ by $M'$. By \ref{m'notinD},
$\D$ and $\D'$ must be distinct subcategories of $\C$.

\begin{proposition}\label{sesR2}
Let \[ \SES{M'}{f}{E}{g}{M}\]
be a short exact sequence in $\C$ where $g$ is a minimal right $\overline{\D}$-approximation.
For every $n>1$, if $\Ext^i_{\C}(\D,\D)=0$ for all $1\leq i \leq n$, then
$\Ext^{i}_{\C}(\D',\D')=0$ for all $1\leq i \leq n-1$.
\end{proposition}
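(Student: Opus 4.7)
The plan is to exploit the additive presentation $\D' = \add(\{M'\} \cup \overline{\D})$. Since $\Ext^i_\C(-,-)$ is additive in both variables, the required vanishing reduces to checking $\Ext^i_\C(X,Y) = 0$ for $1 \le i \le n-1$ whenever $X, Y$ range over $\{M'\} \cup \ind\overline{\D}$, yielding four cases. The case $X, Y \in \overline{\D}$ is immediate from the hypothesis since $\overline{\D} \subseteq \D$.

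For the two ``mixed'' cases I would apply the appropriate $\HomC$ functor to the given short exact sequence. With $X \in \overline{\D}$ and $Y = M'$, the long exact sequence contains the fragment
\[ \Ext^{i-1}_\C(X, M) \lto \Ext^i_\C(X, M') \lto \Ext^i_\C(X, E) \]
whose outer terms vanish for $i \ge 2$ because $X, E, M \in \D$ and the hypothesis extends through degree $n$; the base case $i=1$ is exactly \cref{extdm'}. Symmetrically, for $X = M'$ and $Y \in \overline{\D}$, applying $\HomC(-, Y)$ produces
\[ \Ext^i_\C(E, Y) \lto \Ext^i_\C(M', Y) \lto \Ext^{i+1}_\C(M, Y) \]
whose outer terms vanish for $1 \le i \le n-1$ directly from the hypothesis.

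The fourth case, $\Ext^i_\C(M', M') = 0$, I would handle in two steps. First, rerun the $\HomC(-, M)$ argument (even though $M \notin \overline{\D}$) to obtain $\Ext^i_\C(M', M) = 0$ for $1 \le i \le n-1$. Then apply $\HomC(M', -)$ to the short exact sequence to get
\[ \Ext^{i-1}_\C(M', M) \lto \Ext^i_\C(M', M') \lto \Ext^i_\C(M', E) \]
For $i \ge 2$ the right term vanishes by the previous case (taking $Y = E$) and the left term by the auxiliary step. The base case $i=1$ is \cref{extm'm'}, whose hypothesis $\Ext^1_\C(M', E) = 0$ is supplied by the symmetric case at $i = 1$, which is available precisely because $n - 1 \ge 1$.

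The only substantive difficulty is the bookkeeping of degree shifts: each appeal to the long exact sequence consumes one degree, which is exactly the reason the conclusion drops from degree $n$ to degree $n - 1$. The assumption $n > 1$ is used in only one spot, namely to bootstrap the base case of Case 4 via \cref{extm'm'}; every other step is routine long-exact-sequence chasing.
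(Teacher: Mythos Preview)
Your proposal is correct and matches the paper's proof essentially step for step: the same reduction to the three nontrivial pairs, the same long-exact-sequence fragments for the mixed cases with \cref{extdm'} handling $i=1$, and the same two-step treatment of $\Ext^i_\C(M',M')$ via the auxiliary vanishing of $\Ext^{i-1}_\C(M',M)$ together with \cref{extm'm'} for $i=1$. The only cosmetic difference is that you isolate $\Ext^i_\C(M',M)=0$ as a standalone claim for all $i$ up front, whereas the paper derives the needed instance inline.
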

\begin{proof}
Suppose that $\Ext^i_{\C}(\D,\D)=0$ for all $1\leq i \leq n$.
It is sufficient to show that $\Ext^{i}_{\C}(M',M')=0$, $\Ext^{i}_{\C}(M',\overline{\D})=0$
and $\Ext^{i}_{\C}(\overline{\D}, M')=0$ for all $1\leq i \leq n-1$.

Let $X\in\overline{\D}$. We have an exact sequence
\[\Ext^{i}_{\C}(E,X) \lto \Ext^{i}_{\C}(M',X) \lto \Ext^{i+1}_{\C}(M,X)\]
Because $E,X,M\in\D$ we get $\Ext^{i}_{\C}(E,X)=0$ and $\Ext^{i+1}_{\C}(M,X)=0$ since $i+1\leq n$.
Thus $\Ext^{i}_{\C}(M',X)=0$.

Next, when $i\geq 2$, the sequence
\[\Ext^{i-1}_{\C}(X,M) \lto \Ext^{i}_{\C}(X,M') \lto \Ext^{i}_{\C}(X,E)\]
is exact. Since $E,X,M\in\D$, we get $\Ext^{i-1}_{\C}(X,M)=0$ and
$\Ext^{i}_{\C}(X,E)=0$. When $i=1$, \ref{extdm'} applies. In both cases
$\Ext^{i}_{\C}(X,M')=0$.

Finally, when $i\geq 2$, consider the exact sequence
\[\Ext^{i-1}_{\C}(M',M) \lto \Ext^{i}_{\C}(M',M')\lto \Ext^{i}_{\C}(M',E) = 0\]
To establish that $\Ext^{i}_{\mathcal C}(M',M')=0$, it is sufficient to check
$\Ext^{i-1}_{\mathcal C}(M',M)=0$, but this is a consequence of
\[0=\Ext^{i-1}_{\C}(E,M) \lto \Ext^{i-1}_{\C}(M',M)\lto \Ext^{i}_{\C}(M,M) = 0\]
When $i=1$, we can apply \ref{extm'm'} since $\Ext^{1}_{\C}(M',E)=0$.
We conclude that $\Ext^{i}_{\C}(M',M')=0$.
\end{proof}

\begin{corollary}\label{sesR3}
If $\Ext^i_{\C}(\D,\D)=0$ for all $i \geq 1$, then
$\Ext^{i}_{\C}(\D',\D')=0$ for all $i \geq 1$.
\end{corollary}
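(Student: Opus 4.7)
The corollary should follow essentially immediately from \cref{sesR2}, so the plan is mostly about packaging the quantifiers correctly.

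My approach is as follows. Fix an arbitrary integer $i \geq 1$ and set $n = i + 1$, so that $n > 1$ as required by the hypothesis of \cref{sesR2}. The assumption $\Ext^j_{\C}(\D,\D) = 0$ for all $j \geq 1$ then certainly implies $\Ext^j_{\C}(\D,\D) = 0$ for all $1 \leq j \leq n$, which is exactly the hypothesis of the proposition. Applying \cref{sesR2} yields $\Ext^{j}_{\C}(\D',\D') = 0$ for all $1 \leq j \leq n - 1 = i$, and in particular $\Ext^{i}_{\C}(\D',\D') = 0$. Since $i \geq 1$ was arbitrary, the conclusion holds for every $i \geq 1$.

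There is no real obstacle here; the corollary is a purely formal consequence of letting $n$ range freely in the statement of \cref{sesR2}. Note also that the short exact sequence $\SES{M'}{f}{E}{g}{M}$ from \cref{sesR2} does exist under the standing hypothesis, since $\Ext^1_{\C}(\D,\D) = 0$ ensures the existence of a minimal right $\overline{\D}$-approximation $g : E \to M$ (in a Krull--Schmidt abelian setting) whose kernel fits into such a sequence, but this need not be argued again because it is inherited directly from the set-up of the proposition.
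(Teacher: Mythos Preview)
Your argument is correct and is exactly the intended one: the paper states \cref{sesR3} without proof, treating it as an immediate consequence of \cref{sesR2}, and your packaging of the quantifiers (take $n=i+1$ for each fixed $i$) is the obvious way to make this precise. One small quibble: the vanishing of $\Ext^1_{\C}(\D,\D)$ does not by itself guarantee the existence of a minimal right $\overline{\D}$-approximation of $M$; rather, the short exact sequence is part of the standing hypotheses of the section, and you are right to simply inherit it from the set-up of \cref{sesR2}.
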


Starting with a monomorphic minimal left approximation, we obtain results dual to those presented previously in this section. They are summarised in the following proposition:
\begin{proposition}
Let \[ \SES{M}{f}{E}{g}{M''}\]
be a short exact sequence in $\C$ where $f$ is a minimal left $\overline{\D}$-approximation.
Let $\D'' = \add{\set{M''}} \cup \overline{\D})$. Then
\begin{enumerate}
  \item The morphism $g$ is a minimal right $\overline{\D}$-approximation.
  \item The object $M''$ is not in $\D$.
  \item $\Ext^1_{\C}(M'', \overline{\D})=0$.
  \item If $\Ext^1_{\C}(E,M'')=0$ then $\Ext^1_{\C}(M'',M'')=0$.
  \item If $\Ext^i_{\C}(\D,\D)=0$ for all $1\leq i \leq n$, then
           $\Ext^{i}_{\C}(\D'',\D'')=0$ for all $1\leq i \leq n-1$.
  \item If $\Ext^i_{\C}(\D,\D)=0$ for all $i \geq 1$, then
           $\Ext^{i}_{\C}(\D'',\D'')=0$ for all $i \geq 1$.
\end{enumerate}
\end{proposition}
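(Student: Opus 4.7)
The proposition collects the duals of \cref{sesR1}, \cref{sesR2}, and \cref{sesR3}. The plan is to mirror each of those arguments under the duality that exchanges left for right, swaps $\HomC(-,X)$ with $\HomC(X,-)$, and interchanges the two end terms of the short exact sequence. Since the hypothesis $\Ext^1_{\C}(\D,\D)=0$ is symmetric in its two arguments and the abelian Krull--Schmidt structure is self-dual, every ingredient used in the earlier proofs has a direct counterpart.

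For part~(a), I apply $\HomC(X,-)$ for $X\in\overline{\D}$ to the sequence to obtain
\[ \HomC(X,E)\stackrel{g_*}{\lto} \HomC(X,M'') \lto \Ext^1_{\C}(X,M) = 0, \]
so $g_*$ is surjective and $g$ is a right $\overline{\D}$-approximation. Right minimality comes from clause~(2) of \cref{mindec}: a non-trivial decomposition $g=\smatmaph{g_1}{0}:E_1\oplus E_2\to M''$ would exhibit $E_2$ as a direct summand of $\Ker g\cong M$, hence $E_2\cong M$ by indecomposability, placing $M\cong E_2\in\overline{\D}$, a contradiction. For part~(b), if $M''\in\D$ then $\Ext^1_{\C}(M'',M)=0$, the sequence splits, and $M$ appears as a summand of $E\in\overline{\D}$, again impossible.

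For parts~(c) and~(d) I apply $\HomC(-,X)$ and $\HomC(-,M'')$ respectively. In~(c), the sequence
\[ \HomC(E,X)\stackrel{f^*}{\lto}\HomC(M,X) \lto \Ext^1_{\C}(M'',X) \lto \Ext^1_{\C}(E,X)=0 \]
has $f^*$ surjective because $f$ is a left $\overline{\D}$-approximation, killing $\Ext^1_{\C}(M'',X)$. In~(d), the analogous sequence reduces vanishing of $\Ext^1_{\C}(M'',M'')$ to surjectivity of $f^*:\HomC(E,M'')\to\HomC(M,M'')$. Given $h:M\to M''$, I first use $\Ext^1_{\C}(M,M)=0$ to lift $h$ through $g$ as $h=gt$ with $t:M\to E$; then, since $f$ is a left $\overline{\D}$-approximation and $E\in\overline{\D}$, I factor $t=sf$ with $s:E\to E$, producing $h = gsf = f^*(gs)$.

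Parts~(e) and~(f) are the strict duals of \cref{sesR2} and \cref{sesR3}: I run the same dimension-shift arguments in the long exact sequences arising from $\HomC(X,-)$, $\HomC(-,X)$, and $\HomC(-,M'')$ applied to the given sequence for $X\in\overline{\D}$. The flanking $\Ext^i$ terms vanish because $E$, $M$, and the test object lie in $\D$, where higher Ext vanishes by hypothesis; the $i=1$ base cases are handled by~(c) and~(d). I do not foresee a substantive obstacle, as the duality is formal and no new input is required. The only point demanding care is step~(d), where one must invoke $f$ (rather than $g$) as the approximation used to rewrite the intermediate map~$t$, since that is the only approximation hypothesis available in this dual setting.
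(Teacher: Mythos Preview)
Your proposal is correct and follows the same approach as the paper, which simply asserts that the proposition summarises the results obtained by dualising \cref{sesR1}, \cref{sesR2}, and \cref{sesR3} without spelling out the details. You have carried out exactly this dualisation explicitly and accurately, including the one point requiring care in part~(d), where the roles of $f$ and $g$ are exchanged relative to the original argument.
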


\section{Markoff Modules}

Let $\mathcal{C}$ be an abelian Krull-Schmidt $k$-category for some field $k$.
A triple $(M_1, M_2, M_3)$ of pairwise non-isomorphic indecomposable objects in $\mathcal{C}$
is called \emph{mutable} if it satisfies the following conditions:

\begin{enumerate}[label=(M\arabic{*}), ref=(M\arabic{*})]
\item $\Ext^n_{\C}(M_1\oplus M_2\oplus M_3, M_1\oplus
M_2\oplus M_3)=0$ for all $n\geq 1$. \label{M1}

\item $\dim_k \End_{\C}(M_i)=1$ for $i=1,2,3$. \label{M2}

\item $\HomC(M_i,M_j) = 0$ whenever $i>j$.\label{M3}

\item $\HomC(M_1,M_2)$ has a basis $\set{\beta_1, \beta_2}$ with $\beta_1$ and $\beta_2$ monomorphisms,\\
$\HomC(M_2,M_3)$ has a basis $\set{\alpha_1, \alpha_2}$ with $\alpha_1$ and $\alpha_2$ epimorphisms,\\
$\HomC(M_1,M_3)$ has a basis $\set{\gamma_1, \gamma_2}$ such that $\gamma_1=\alpha_1\beta_1$ and $\gamma_2=\alpha_2\beta_2$,\\
$\alpha_1\beta_2=0$ and $\alpha_2\beta_1=0$.\label{M4}
\end{enumerate}
In particular the quiver of $\End_{\mathcal{C}}(M_1\oplus M_2\oplus M_3)$ is of the form
\begin{equation*}
\xymatrix{ M_1 \ar@{.}@/^1pc/[rr]\ar@{.}@/_1pc/[rr]
\ar@<2pt>[r]\ar@<-2pt>[r] & M_2 \ar@<2pt>[r]\ar@<-2pt>[r] & M_3}
\end{equation*}
We remark that it is not necessary to assume that $M_1, M_2, M_3$ are indecomposable and pairwise non-isomorphic as these properties are consequences of \ref{M2} and \ref{M3}.

\begin{proposition}
If $(M_1, M_2, M_3)$ is a mutable triple, then
\begin{enumerate}
    \item $M_1$ is not injective.
    \item $M_3$ is not projective.
    \item $M_2$ is neither injective, nor projective.
\end{enumerate}
\end{proposition}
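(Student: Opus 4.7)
The plan is to detect non-injectivity and non-projectivity through a nonvanishing $\Ext^{1}$. In each case I would form a short exact sequence built from one of the monomorphisms $\beta_i$ or epimorphisms $\alpha_i$ supplied by \ref{M4}, apply a suitable $\Hom$ functor, and read off a nonzero $\Ext^{1}$ from the resulting six-term exact sequence. A useful preliminary observation is that $\beta_1$ and $\alpha_1$ are not isomorphisms: if, say, $\beta_1$ were invertible, then $M_1 \cong M_2$ would force $\dim_k \HomC(M_1, M_2) = \dim_k \End_{\C}(M_1) = 1$ by \ref{M2}, contradicting \ref{M4}.

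For (a), I would start from the short exact sequence $0 \lto M_1 \stackrel{\beta_1}{\lto} M_2 \lto N \lto 0$ with $N \neq 0$. Applying $\HomC(-, M_1)$ and invoking $\HomC(M_2, M_1) = 0$ from \ref{M3}, the connecting morphism $\End_{\C}(M_1) \to \Ext^{1}_{\C}(N, M_1)$ is injective, whence $\Ext^{1}_{\C}(N, M_1) \neq 0$ and $M_1$ cannot be injective. Part (b) is symmetric: take $0 \lto K \lto M_2 \stackrel{\alpha_1}{\lto} M_3 \lto 0$, apply $\HomC(M_3, -)$, use $\HomC(M_3, M_2) = 0$, and obtain $\Ext^{1}_{\C}(M_3, K) \neq 0$, so $M_3$ is not projective.

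For (c) I would reuse the same two short exact sequences and apply $\HomC(-, M_2)$ and $\HomC(M_2, -)$, respectively. Since $\Ext^{1}_{\C}(M_2, M_2) = 0$ by \ref{M1}, in the first case the cokernel of $\End_{\C}(M_2) \to \HomC(M_1, M_2)$, $h \mapsto h\beta_1$, is isomorphic to $\Ext^{1}_{\C}(N, M_2)$. By \ref{M2} and \ref{M4} the image of this map is the one-dimensional subspace spanned by $\beta_1$ inside the two-dimensional space $\HomC(M_1, M_2)$, so the cokernel is nonzero and $M_2$ is not injective. The symmetric count applied to $h \mapsto \alpha_1 h$ gives $\Ext^{1}_{\C}(M_2, K) \neq 0$, showing that $M_2$ is not projective. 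The only real subtlety is the dimension bookkeeping in part (c), which depends jointly on \ref{M2} and \ref{M4}; the rest is routine manipulation of long exact sequences.
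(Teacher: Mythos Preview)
Your proof is correct but follows a different route from the paper's. The paper argues directly from the defining lifting/extension properties of projectives and injectives: for (b), projectivity of $M_3$ would make the epimorphism $\alpha_1$ a retraction, forcing $M_3$ to be a summand of the indecomposable $M_2$; for (c), projectivity of $M_2$ would let one lift $\alpha_2$ through the epimorphism $\alpha_1$, but any such lift lies in $\End_{\C}(M_2)\cong k$, contradicting the linear independence of $\alpha_1,\alpha_2$. Part (a) is dual. Your argument instead packages the same obstructions as nonvanishing $\Ext^1$ groups via the long exact sequence, and in (c) the dimension count on the map $h\mapsto h\beta_1$ (resp.\ $h\mapsto\alpha_1 h$) is essentially the contrapositive of the paper's lifting argument. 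The paper's approach is shorter and uses no homological machinery beyond the definitions; yours has the mild advantage of actually identifying the obstructing extension groups (e.g.\ $\dim_k\Ext^1_{\C}(N,M_2)=1$), which is extra information, and it makes the role of \ref{M1} in part (c) explicit.
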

\begin{proof}\mbox{}
\begin{enumerate}
    \item Dual of (b).
    \item Suppose that $M_3$ is projective. The epimorphism 
          $\alpha_1:M_2 \to M_3$ must then be a retraction.
          Thus $M_3$ is isomorphic to a direct summand of $M_2$, which leads to
          the contradiction $M_3\cong M_2$.
    \item Suppose that $M_2$ is projective. Since $\alpha_1$ is an epimorphism,
          there exists $h:M_2 \to M_2$ such that $\alpha_1 h = \alpha_2$.
          But $h \in \End(M_2) \cong_k k$, so $h=\lambda\mathbb{I}_{M_2}$ for some $\lambda \in k$.
          Whence the contradiction $\alpha_2 = \lambda \alpha_1$.
          The non-injectivity of $M_2$ is shown analogously.
\end{enumerate}
\end{proof}

\begin{proposition}\label{mtses}
Let $(M_1, M_2, M_3)$ be a mutable triple. There are short exact sequences in $\C$
\begin{enumerate}
    \item \begin{equation}\label{mmses}
          \SES{M'_3}{f}{M_2\oplus M_2}{g}{M_3}
          \end{equation}
where $f$ is a minimal left $\add{M_1\oplus M_2}$-approximation and $g$ is a minimal right $\add{M_1\oplus M_2}$-approximation.
    \item
\[ \SES{M_1}{f'}{M_2\oplus M_2}{g'}{M'_1} \]
where $f'$ is a minimal left $\add{M_2\oplus M_3}$-approximation and $g'$ is a minimal right $\add{M_2\oplus M_3}$-approximation.
\end{enumerate}
\end{proposition}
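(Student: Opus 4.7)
The plan for part (a) is to define $g$ explicitly, verify that it is a minimal right $\add(M_1\oplus M_2)$-approximation, take $M'_3$ to be its kernel, and then appeal to \ref{fmin} for the statement about $f$. Part (b) will follow by an entirely dual argument using the dual proposition stated at the end of the previous section.

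I would take $g = \matmaph{\alpha_1}{\alpha_2}:M_2\oplus M_2\to M_3$. Since $\alpha_1$ is an epimorphism by \ref{M4}, so is $g$, and setting $M'_3 = \Ker g$ with $f$ the inclusion yields an exact sequence of the claimed form. To check that $g$ is a right $\add(M_1\oplus M_2)$-approximation, it suffices to show that every morphism $M_2\to M_3$ and every morphism $M_1\to M_3$ factors through $g$. A morphism $M_2\to M_3$ is of the form $\lambda_1\alpha_1+\lambda_2\alpha_2$ by \ref{M4} and equals $g\smatmapv{\lambda_1}{\lambda_2}$; a morphism $M_1\to M_3$ is $\mu_1\gamma_1+\mu_2\gamma_2 = \mu_1\alpha_1\beta_1+\mu_2\alpha_2\beta_2 = g\smatmapv{\mu_1\beta_1}{\mu_2\beta_2}$, where the factorisations $\gamma_i=\alpha_i\beta_i$ from \ref{M4} are essential. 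For right minimality, any $h\in\End(M_2\oplus M_2)$ with $gh=g$ is represented by a $2\times 2$ matrix with entries in $k$ by \ref{M2}, and the equation $gh=g$ combined with the linear independence of $\alpha_1,\alpha_2$ in $\HomC(M_2,M_3)$ forces $h$ to be the identity, so $h$ is an automorphism.

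At this point the hypotheses of \cref{sesR1} hold with $\D = \add(M_1\oplus M_2\oplus M_3)$ and $M=M_3$: by \ref{M1} we have $\Ext^1_{\C}(\D,\D)=0$, and $\overline{\D}=\add(M_1\oplus M_2)$. Invoking \ref{fmin} then gives that $f$ is a minimal left $\add(M_1\oplus M_2)$-approximation, finishing (a). For part (b) I would dually take $f' = \smatmapv{\beta_1}{\beta_2}:M_1\to M_2\oplus M_2$, a monomorphism since $\beta_1$ is one, set $M'_1 = \Coker f'$, verify the left approximation property for $f'$ by expanding arbitrary maps $M_1\to M_2$ and $M_1\to M_3$ in terms of the bases in \ref{M4}, check left minimality analogously, and conclude with the dual of \cref{sesR1}. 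I do not expect any serious obstacle; the argument reduces to a short linear-algebraic verification whose only slightly delicate point is the use of the specific factorisations $\gamma_i=\alpha_i\beta_i$ in \ref{M4}, without which the approximation property for maps out of $M_1$ would fail.
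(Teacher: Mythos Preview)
Your proposal is correct and follows essentially the same approach as the paper: define $g=\smatmaph{\alpha_1}{\alpha_2}$, verify surjectivity and the approximation property via \ref{M4}, take $M'_3=\Ker g$, and invoke \ref{fmin} for $f$. The only minor difference is that you check right minimality of $g$ directly by a $2\times 2$ linear-algebra computation, whereas the paper cites \cref{mindec}; both arguments ultimately rest on the linear independence of $\alpha_1,\alpha_2$.
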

\begin{proof}
We will only prove (a), the proof of (b) is similar.
The map $g = \matmaph{\alpha_1}{\alpha_2}:M_2 \oplus M_2 \to M_3$ is an epimorphism by condition \ref{M4}.
Let $M'_3 = \Ker g$, then the sequence (\ref{mmses}) is exact.
All maps $M_1\to M_3$ and $M_2\to M_3$ factor through $g$ by \ref{M4} from which we deduce that $g$ is a right $\add{M_1\oplus M_2}$-approximation. The minimality of $g$ follows from \cref{mindec}.
Finally, by \ref{fmin}, $f$ is a minimal left $\add{M_1\oplus M_2}$-approximation.
\end{proof}

Let $T=(M_1, M_2, M_3)$ be a mutable triple. We define $\mu_L(T) = (M_2, M'_1, M_3)$ and $\mu_R(T) = (M_1, M'_3, M_2)$. We will show that $\mu_L(T)$ and $\mu_R(T)$ are mutable triples.

\begin{lemma}\label{homext}
If $\mu_R(T) = (M_1, M'_3, M_2)$ where $T=(M_1,M_2,M_3)$ is a mutable triple, then
\begin{enumerate}[ref=(\alph{*})]
  \item $\dim_k \HomC(M_1, M'_3) = \dim_k \HomC(M'_3, M_2) = 2$             \label{homm'3m2}
  \item $\HomC(M'_3, M_1) = \HomC(M_2,M'_3) = 0$                            \label{homm2m'3}
  \item $\dim_k \HomC(M'_3, M_3) = 3$ and $\Hom_{\mathcal{C}}(M_3, M'_3) =0$
  \item $\dim_k \Ext^1_{\C}(M_3,M'_3) = 1$                                  \label{extm3m'3}
  \item $\dim_k \End_{\C}(M'_3) = 1$
\end{enumerate}
\end{lemma}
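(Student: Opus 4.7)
The plan is to apply suitable $\Hom$ functors to the short exact sequence
\[ 0 \lto M'_3 \stackrel{f}{\lto} M_2\oplus M_2 \stackrel{g}{\lto} M_3 \lto 0 \]
from \cref{mtses}(a), and read off all five assertions from the resulting long exact sequences, using properties \ref{M1}--\ref{M4}, the approximation property of $g$, and the results of Section~3.

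For (a), I apply $\HomC(M_1,-)$: the induced map $g_*$ is surjective because $g$ is a right $\add(M_1\oplus M_2)$-approximation, and $\HomC(M_1,M_2\oplus M_2)$ has dimension $4$ while $\HomC(M_1,M_3)$ has dimension $2$ by \ref{M4}, so $\dim\HomC(M_1,M'_3)=2$. For $\HomC(M'_3,M_2)$, apply $\HomC(-,M_2)$: we get $\HomC(M_3,M_2)=0$ by \ref{M3} and $\Ext^1_{\C}(M_3,M_2)=0$ by \ref{M1}, leaving $\dim\HomC(M'_3,M_2)=\dim\HomC(M_2\oplus M_2,M_2)=2$.

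For (b), applying $\HomC(-,M_1)$ immediately gives $\HomC(M'_3,M_1)=0$ because all of $\HomC(M_3,M_1)$, $\HomC(M_2\oplus M_2,M_1)$ vanish by \ref{M3} and $\Ext^1_{\C}(M_3,M_1)=0$ by \ref{M1}. For $\HomC(M_2,M'_3)$, apply $\HomC(M_2,-)$: the induced $g_*:\HomC(M_2,M_2\oplus M_2)\to\HomC(M_2,M_3)$ is surjective by the approximation property, and both spaces have dimension $2$ by \ref{M2} and \ref{M4}, so $g_*$ is an isomorphism and the kernel $\HomC(M_2,M'_3)$ is zero. For (c), applying $\HomC(-,M_3)$ yields $\dim\HomC(M'_3,M_3)=\dim\HomC(M_2\oplus M_2,M_3)-\dim\End_{\C}(M_3)=4-1=3$ (using that $\Ext^1_{\C}(M_3,M_3)=0$), while $\HomC(M_3,-)$ gives $\HomC(M_3,M'_3)\hookrightarrow\HomC(M_3,M_2\oplus M_2)=0$ by \ref{M3}. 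For (d), continuing the same sequence in (c) one step further produces
\[ 0 = \HomC(M_3,M_2\oplus M_2) \lto \End_{\C}(M_3) \lto \Ext^1_{\C}(M_3,M'_3) \lto \Ext^1_{\C}(M_3,M_2\oplus M_2) = 0, \]
so $\dim\Ext^1_{\C}(M_3,M'_3)=1$.

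The subtle part is (e), and here I will invoke Section~3 with $\D=\add(M_1\oplus M_2\oplus M_3)$, $M=M_3$, $\overline{\D}=\add(M_1\oplus M_2)$, so that $M'=M'_3$ and $E=M_2\oplus M_2$. By \ref{extm'm'}, once I verify $\Ext^1_{\C}(M'_3,M_2\oplus M_2)=0$ I get $\Ext^1_{\C}(M'_3,M'_3)=0$. For the vanishing of $\Ext^1_{\C}(M'_3,M_i)$ with $i=1,2$, apply $\HomC(-,M_i)$ to the defining sequence and use $\Ext^1_{\C}(M_3,M_i)=\Ext^1_{\C}(M_2\oplus M_2,M_i)=\Ext^2_{\C}(M_3,M_i)=0$ from \ref{M1} (this is where the higher-Ext vanishing in \ref{M1} is essential). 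Having $\Ext^1_{\C}(M'_3,M'_3)=0$, I apply $\HomC(M'_3,-)$ to the sequence to get exactness of
\[ 0 \lto \End_{\C}(M'_3) \lto \HomC(M'_3,M_2\oplus M_2) \lto \HomC(M'_3,M_3) \lto 0, \]
yielding $\dim\End_{\C}(M'_3)=4-3=1$ by (a) and (c). The main obstacle is precisely this last step, since one cannot compute $\End_{\C}(M'_3)$ directly from dimensions without first obtaining the self-Ext vanishing, which is exactly what the machinery of Section~3 was developed to provide.
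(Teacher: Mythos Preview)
Your proof is correct and follows the same overall strategy as the paper---applying suitable $\Hom$ functors to the defining short exact sequence and counting dimensions---with only minor variations in parts (a)--(d).

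The one place where you diverge from the paper is part (e). The paper applies the \emph{contravariant} functor $\HomC(-,M'_3)$ to the sequence, obtaining
\[ \HomC(M_2\oplus M_2,M'_3)\lto \End_{\C}(M'_3) \lto \Ext^1_{\C}(M_3,M'_3) \lto \Ext^1_{\C}(M_2\oplus M_2,M'_3), \]
and then reads off $\dim\End_{\C}(M'_3)=1$ directly using $\HomC(M_2,M'_3)=0$ from (b), $\Ext^1_{\C}(M_2,M'_3)=0$ from \ref{extdm'}, and $\dim\Ext^1_{\C}(M_3,M'_3)=1$ from (d). No self-Ext computation and no $\Ext^2$-vanishing is needed. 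Your route via $\HomC(M'_3,-)$ is perfectly valid, but your remark that ``one cannot compute $\End_{\C}(M'_3)$ directly from dimensions without first obtaining the self-Ext vanishing'' overstates the difficulty: switching to the contravariant functor sidesteps it entirely, and in particular the higher-Ext vanishing in \ref{M1} is not essential for this lemma.
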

\begin{proof}\mbox{}
\begin{enumerate}
  \item The sequence
        \[ \SES{\HomC(M_1,M'_3)}{}{\HomC(M_1,M_2\oplus M_2)}{g_*}{\HomC(M_1,M_3)} \]
        is exact since $g$ is a right $\add{M_1\oplus M_2}$-approximation.
        We know that $\dim_k \HomC(M_1,M_3)=2$ and $\dim_k \HomC(M_1,M_2\oplus M_2)=4$. From the additivity of $\dim_k$
        on short exact sequences, we have $\dim_k \HomC(M_1,M'_3)=2$.

        The sequence
        \[ \SES{\HomC(M_3,M_2)}{}{\HomC(M_2\oplus M_2,M_2)}{f^*}{\HomC(M'_3,M_2)} \]
        is exact since $f$ is a left $\add{M_1\oplus M_2}$-approximation.
        We have $\dim_k \HomC(M_2\oplus M_2,M_2) = 2$ and $\dim_k \HomC(M_3, M_2) = 0$.
        Thus $\dim_k \Hom_{\mathcal{C}}(M'_3, M_2) = 2$.
           
  \item The induced morphism $f^* : \HomC(M_2\oplus M_2,M_1) \to \HomC(M'_3,M_1)$ is an epimorphism and
        $\HomC(M_2\oplus M_2, M_1)=0$.

        The sequence 
        \[ \SES{\HomC(M_2,M'_3)}{}{\HomC(M_2,M_2\oplus M_2)}{g_*}{\HomC(M_2,M_3)} \]
        is exact since $g$ is a right $\add{M_1\oplus M_2}$-approximation. We have
        $\dim_k \HomC(M_2,M_2\oplus M_2) = 2$ and $\dim_k \HomC(M_2, M_3) = 2$.
        Thus $\dim_k \HomC(M_2,M'_3) = 0$
  
  \item We have an exact sequence
        \begin{align*}
          0\lto\HomC(M_3,M_3)\lto\HomC(M_2\oplus M_2,M_3) &\lto \HomC(M'_3,M_3) \\
                                                          &\lto \Ext^1_{\C}(M_3,M_3) = 0
        \end{align*}
        where $\dim_k \HomC(M_3,M_3)=1$ and $\dim_k \HomC(M_2\oplus M_2,M_3)=4$.

        The induced morphism $g^* : \HomC(M_3, M'_3) \to\HomC(M_2\oplus M_2,M'_3)$ is a monomorphism since
        the functor $\HomC(-,M'_3)$ is left exact and, by \ref{homm2m'3}, we get
        $\HomC(M_2\oplus M_2,M'_3)=0$.
  
  \item The sequence
        \begin{align*}
          \HomC(M_3,M_2\oplus M_2)\lto \HomC(M_3,M_3) &\lto \Ext^1_{\C}(M_3,M'_3) \\
                                                      &\lto \Ext^1_{\C}(M_3,M_2\oplus M_2)
        \end{align*}
        is exact. We have $\HomC(M_3,M_2\oplus M_2)=0$, $\Ext^1_{\C}(M_3,M_2\oplus M_2)=0$ and
        $\dim_k \HomC(M_3,M_3)=1$, from which we deduce
        $\dim_k \Ext^1_{\C}(M_3,M'_3)=1$.
  
  \item We have an exact sequence
        \begin{align*}
          \HomC(M_2\oplus M_2,M'_3)\lto \HomC(M'_3,M'_3) &\lto \Ext^1_{\C}(M_3,M'_3) \\
                                                         &\lto \Ext^1_{\C}(M_2\oplus M_2,M'_3)
        \end{align*}
        By \ref{homm2m'3}, $\HomC(M_2\oplus M_2,M'_3)=0$ and it follows from \ref{extdm'} that
        $\Ext^1_{\C}(M_2\oplus M_2,M'_3)=0$.
        From \ref{extm3m'3} we get $\dim_k \Ext^1_{\C}(M_3,M'_3)=1$, hence
        $\dim_k \HomC(M'_3,M'_3)=1$
\end{enumerate}
\end{proof}

\begin{lemma}\label{homextD}
If $\mu_L(T) = (M_2, M'_1, M_3)$ where $T=(M_1,M_2,M_3)$ is a mutable triple, then
\begin{enumerate}
  \item $\dim_k \HomC(M_2, M'_1) = \dim_k \HomC(M'_1, M_3) = 2$
  \item $\HomC(M'_1, M_2) = \HomC(M_3,M'_1) = 0$
  \item $\dim_k \HomC(M'_1, M_1) = 0$ and $\Hom_{\mathcal{C}}(M_1, M'_1) =3$
  \item $\dim_k \Ext^1_{\C}(M'_1,M_1) = 1$
  \item $\dim_k \End_{\C}(M'_1) = 1$
\end{enumerate}
\end{lemma}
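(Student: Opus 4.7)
The plan is to mirror the proof of \cref{homext} by applying suitable Hom functors to the short exact sequence from \cref{mtses}(b),
\[ \SES{M_1}{f'}{M_2\oplus M_2}{g'}{M'_1} \]
and extracting dimensions from conditions \ref{M1}--\ref{M4} together with the dual statements proved in the unnumbered proposition at the end of Section 3. At each step the direction of the arrows simply reverses relative to the proof of \cref{homext}, so I only need to check that the corresponding vanishing results are still available.

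For parts (a) and (b), I would apply $\HomC(-,M_2)$, $\HomC(-,M_3)$, $\HomC(M_2,-)$ and $\HomC(M_3,-)$ in turn. Since $f'$ is a left $\add{M_2\oplus M_3}$-approximation, the first two applications produce surjections $\HomC(M_2\oplus M_2,X)\to\HomC(M_1,X)$ for $X\in\{M_2,M_3\}$, and dimension counting using $\dim_k\HomC(M_1,M_2)=\dim_k\HomC(M_2,M_2)=2$ (and the analogous dimensions $4$ and $2$ for $M_3$) immediately gives $\dim_k\HomC(M'_1,M_2)=0$ and $\dim_k\HomC(M'_1,M_3)=2$. For the other two, \ref{M1} supplies $\Ext^1_{\C}(M_i,M_1)=0$ for $i=2,3$, and \ref{M3} gives $\HomC(M_2,M_1)=\HomC(M_3,M_1)=\HomC(M_3,M_2)=0$, producing $\dim_k\HomC(M_2,M'_1)=2$ and $\HomC(M_3,M'_1)=0$.

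For (c) and (d), I apply $\HomC(-,M_1)$ and $\HomC(M_1,-)$. On the one hand, $\HomC(M_2\oplus M_2,M_1)=0$ by \ref{M3} and $\Ext^1_{\C}(M_2\oplus M_2,M_1)=0$ by \ref{M1}, so the long exact sequence
\[ 0\to\HomC(M'_1,M_1)\to 0\to \HomC(M_1,M_1)\to\Ext^1_{\C}(M'_1,M_1)\to 0 \]
yields simultaneously $\HomC(M'_1,M_1)=0$ and $\dim_k\Ext^1_{\C}(M'_1,M_1)=1$. On the other hand, $\Ext^1_{\C}(M_1,M_1)=0$ by \ref{M1} and $\dim_k\HomC(M_1,M_2\oplus M_2)=4$, $\dim_k\HomC(M_1,M_1)=1$, whence $\dim_k\HomC(M_1,M'_1)=3$.

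The last part (e) is the only one that needs a dual invocation of the general exact-sequence machinery. Applying $\HomC(M'_1,-)$ and using $\HomC(M'_1,M_1)=0$ from (c) and $\HomC(M'_1,M_2\oplus M_2)=0$ from (b) leaves
\[ 0\to\End_{\C}(M'_1)\to\Ext^1_{\C}(M'_1,M_1)\to\Ext^1_{\C}(M'_1,M_2\oplus M_2). \]
The right-hand term vanishes by the dual of \ref{extdm'} (part (3) of the unnumbered proposition, with $\overline{\D}=\add{M_2\oplus M_3}$), so combining with (d) gives $\dim_k\End_{\C}(M'_1)=1$. The only conceptual step that could cause trouble is ensuring that the dual of \ref{sesR1} is correctly applied here, but this is precisely what the unnumbered proposition is for; everything else is bookkeeping.
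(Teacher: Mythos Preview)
Your argument is correct and is exactly the dualisation the paper intends: \cref{homextD} is stated without proof precisely because each step of \cref{homext} carries over by swapping covariant and contravariant Hom functors on the sequence of \cref{mtses}(b) and invoking the dual proposition at the end of Section~3 in place of \cref{sesR1}. One minor slip: where you write $\dim_k\HomC(M_2,M_2)=2$ you presumably mean $\dim_k\HomC(M_2\oplus M_2,M_2)=2$ (since $\dim_k\End_{\C}(M_2)=1$ by \ref{M2}); the numerics are unaffected.
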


\begin{lemma}\label{m'3bases}
Let $T=(M_1,M_2,M_3)$ be a mutable triple.
\begin{enumerate}
  \item If $\mu_R(T) = (M_1, M'_3, M_2)$, then there are bases $\set{\beta'_1, \beta'_2}$ and $\set{\alpha'_1, \alpha'_2}$
of $\HomC(M_1, M'_3)$ and $\HomC(M'_3,M_2)$ respectively, such that $\beta'_1$ and
$\beta'_2$ are monomorphisms, $\alpha'_1$ and $\alpha'_2$ are epimorphisms, 
$\beta_1 = \alpha'_2 \beta'_2$, $\beta_2 = \alpha'_1 \beta'_1$, $\alpha'_2 \beta'_1 = 0 = \alpha'_1 \beta'_2$.
  \item If $\mu_L(T) = (M_2, M'_1, M_3)$, then there are bases $\set{\beta'_1, \beta'_2}$ and $\set{\alpha'_1, \alpha'_2}$
of $\HomC(M_2, M'_1)$ and $\HomC(M'_1,M_3)$ respectively, such that $\beta'_1$ and
$\beta'_2$ are monomorphisms, $\alpha'_1$ and $\alpha'_2$ are epimorphisms, 
$\alpha_1 = \alpha'_2 \beta'_2$, $\alpha_2 = \alpha'_1 \beta'_1$, $\alpha'_2 \beta'_1 = 0 = \alpha'_1 \beta'_2$.
\end{enumerate}
\end{lemma}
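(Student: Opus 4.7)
The plan is to extract the desired bases directly from the short exact sequence of \cref{mtses} and to use the vanishing relations of \ref{M4} to build the required factorisations via the universal property of the kernel. I will sketch part (a); part (b) follows by the dual argument applied to \cref{mtses}(b).

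For the basis of $\HomC(M'_3,M_2)$, I would set $\alpha'_j = p_j \circ f$, where $p_j:M_2\oplus M_2 \to M_2$ are the canonical projections and $f$ is the map appearing in (\ref{mmses}). Applying $\HomC(-,M_2)$ to (\ref{mmses}) and using $\HomC(M_3,M_2)=0$ (from \ref{M3}) together with $\Ext^1_{\C}(M_3,M_2)=0$ (from \ref{M1}) yields a $k$-linear isomorphism $\HomC(M_2\oplus M_2,M_2)\xrightarrow{\sim}\HomC(M'_3,M_2)$ sending $[\lambda,\mu]$ to $\lambda\alpha'_1+\mu\alpha'_2$. In particular $\{\alpha'_1,\alpha'_2\}$ is a basis, consistent with the dimension in \cref{homm'3m2}. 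To see that each $\alpha'_j$ is an epimorphism, I would identify $M'_3=\Ker g$ with the pullback of the two epimorphisms $\alpha_1,\alpha_2:M_2\to M_3$; the sign twist $\mathbb{I}\oplus(-\mathbb{I})$ on $M_2\oplus M_2$ converts the kernel of $[\alpha_1,\alpha_2]$ into this pullback. Under the identification, $\alpha'_1$ and $-\alpha'_2$ become the pullback projections, which are epimorphisms since pullbacks of epimorphisms are epimorphisms in an abelian category.

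The relations $\alpha_1\beta_2=0=\alpha_2\beta_1$ from \ref{M4} give $g\smatmapv{\beta_2}{0}=0$ and $g\smatmapv{0}{\beta_1}=0$, so the universal property of $\Ker g$ supplies unique $\beta'_1,\beta'_2:M_1\to M'_3$ with
\[ f\beta'_1 = \matmapv{\beta_2}{0}, \qquad f\beta'_2 = \matmapv{0}{\beta_1}. \]
Composing with $p_1$ and $p_2$ yields at once $\alpha'_1\beta'_1=\beta_2$, $\alpha'_2\beta'_1=0$, $\alpha'_1\beta'_2=0$ and $\alpha'_2\beta'_2=\beta_1$. Any relation $\lambda\beta'_1+\mu\beta'_2=0$ becomes $\smatmapv{\lambda\beta_2}{\mu\beta_1}=0$ after applying $f$, and since $\beta_1,\beta_2$ are nonzero monomorphisms this forces $\lambda=\mu=0$; combined with $\dim_k\HomC(M_1,M'_3)=2$ from \cref{homm'3m2}, this makes $\{\beta'_1,\beta'_2\}$ a basis. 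Each $\beta'_j$ is a monomorphism, because one component of $f\beta'_j$ is one of the monomorphisms $\beta_1,\beta_2$ (forcing $f\beta'_j$ itself to be a monomorphism) and $f$ is a monomorphism.

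The step I expect to be the main technical point is the pullback identification used to prove that $\alpha'_1$ and $\alpha'_2$ are epimorphisms; everything else reduces to a long-exact-sequence dimension count or a direct diagram chase from the already established formulas.
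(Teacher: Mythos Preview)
Your proof is correct and follows essentially the same route as the paper: define $\alpha'_j=p_j f$, show $f^*$ is an isomorphism to obtain the basis of $\HomC(M'_3,M_2)$, and lift $\smatmapv{\beta_2}{0}$, $\smatmapv{0}{\beta_1}$ through the kernel to produce $\beta'_1,\beta'_2$ with the required factorisations and vanishing relations. The one point of departure is the epimorphism step for $\alpha'_1,\alpha'_2$: the paper applies the snake lemma to the map of short exact sequences $(M'_3\to M_2\oplus M_2\to M_3)\to(M_2=M_2\to 0)$ induced by $\pi_j$ and reads off $\Coker\alpha'_1\cong\Coker\alpha_2=0$, whereas you recognise $M'_3$ (after the sign twist) as the pullback of $\alpha_1$ along $\alpha_2$ and invoke stability of epimorphisms under pullback in an abelian category. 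Both arguments are standard and equivalent in strength; your pullback observation is slightly more conceptual, while the paper's snake-lemma computation keeps everything explicit without naming the pullback.
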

\begin{proof}
We will only prove (a), the proof of (b) is similar.

Let $\alpha'_1, \alpha'_2: M'_3\to M_2$ be the components of the morphism $f$ appearing in the short exact sequence (\ref{mmses}).
As seen in the proof of \cref{homext}\ref{homm'3m2}, $f^* : \HomC(M_2\oplus M_2,M_2) \to \HomC(M'_3,M_2)$ is an isomorphism. Let $\pi_1, \pi_2$ be the canonical projections $M_2\oplus M_2 \twoheadrightarrow M_2$, then it follows from \ref{M2} that $\set{\pi_1,\pi_2}$ is a basis of $\HomC(M_2\oplus M_2,M_2)$. Thus $\set{\alpha'_1, \alpha'_2} = f^*(\set{\pi_1,\pi_2})$ is a basis of $\HomC(M'_3,M_2)$. 
We must now show that $\alpha'_1$ and $\alpha'_2$ are epimorphisms. Consider the following commutative diagram with exact rows:
\[ \xymatrix{
0 \ar[r] & M'_3 \ar[r]^-{f} \ar[d]^{\alpha'_1} & M_2\oplus M_2 \ar[r]^-{g} \ar[d]^{\pi_1} & M_3 \ar[r] \ar[d] & 0 \\
0 \ar[r] & M_2  \ar@{=}[r]                    & M_2           \ar[r]                    & 0                 &   \\
} \]
By the snake lemma, there is an exact sequence
\[ M_2 \stackrel{\alpha_2}{\lto} M_3 \lto \Coker \alpha'_1 \lto 0\]
and so $\Coker \alpha'_1 = 0$ since $\alpha_2$ is an epimorphism. The same argument shows that $\alpha'_2$ is an epimorphism. 

We have an exact sequence
\[ \SES{\HomC(M_1,M'_3)}{f_*}{\HomC(M_1,M_2\oplus M_2)}{g_*}{\HomC(M_1,M_3)} \]
thus $\HomC(M_1,M'_3) \cong \Img f_* = \Ker g_*$ and so $\dim_k \Img f_* = 2$.
By \ref{M4}, $\alpha_1\beta_2=0$ and $\alpha_2\beta_1=0$ hence
$\smatmapv{\beta_2}{0}$ and $\smatmapv{0}{\beta_1} \in \Ker g_*$ since $g=\smatmaph{\alpha_1}{\alpha_2}$.
These two morphisms form a basis of $\Img f_*$ and consequently there exists a basis $\set{\beta'_1, \beta'_2}$
of $\HomC(M_1, M'_3)$ satisfying
\[ f \beta'_1 = \matmapv{\alpha'_1 \beta'_1}{\alpha'_2 \beta'_1} = \matmapv{\beta_2}{0}
\quad\text{ and }\quad f \beta'_2 = \matmapv{\alpha'_1 \beta'_2}{\alpha'_2 \beta'_2} = \matmapv{0}{\beta_1} \]
Furthermore, since $\beta_1$ and $\beta_2$ are monomorphisms, $\beta'_1$ and $\beta'_2$ must also be monomorphisms.
\end{proof}

By combining the results from \cref{sesR3,homext,homextD,m'3bases}, we get the following:
\begin{theorem}
If $T=(M_1, M_2, M_3)$ is a mutable triple, then so are $\mu_L(T)$ and $\mu_R(T)$.
\end{theorem}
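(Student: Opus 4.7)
The plan is to verify each of the four mutability conditions for $\mu_R(T) = (M_1, M'_3, M_2)$ by assembling the results already accumulated in the preceding lemmas; the argument for $\mu_L(T) = (M_2, M'_1, M_3)$ will be completely dual, invoking \cref{homextD} and \cref{m'3bases}(b) in place of \cref{homext} and \cref{m'3bases}(a), together with the dual of \cref{sesR3}. So I concentrate on the right mutation.

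For \ref{M1}, I apply \cref{sesR3} with $\D = \add{M_1 \oplus M_2 \oplus M_3}$ and distinguished indecomposable $M = M_3$, so that $\overline{\D} = \add{M_1 \oplus M_2}$. The short exact sequence of \cref{mtses}(a) is exactly of the form to which \cref{sesR3} applies, and the resulting subcategory $\D'$ equals $\add{M_1 \oplus M_2 \oplus M'_3}$, yielding $\Ext^i_{\C}(\D',\D') = 0$ for every $i \geq 1$. Condition \ref{M2} is then immediate: $\dim_k \End_{\C}(M'_3) = 1$ by part (e) of \cref{homext}, while the endomorphism spaces of the unchanged summands $M_1$ and $M_2$ are already one-dimensional by hypothesis. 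For \ref{M3}, the two new vanishing statements $\HomC(M'_3, M_1) = 0$ and $\HomC(M_2, M'_3) = 0$ come from \cref{homext}\ref{homm2m'3}, while $\HomC(M_2, M_1) = 0$ is inherited from the original \ref{M3}.

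Condition \ref{M4} is delivered by \cref{m'3bases}(a): it supplies a basis $\set{\beta'_1, \beta'_2}$ of $\HomC(M_1, M'_3)$ consisting of monomorphisms, a basis $\set{\alpha'_1, \alpha'_2}$ of $\HomC(M'_3, M_2)$ consisting of epimorphisms, and the factorisations $\alpha'_1 \beta'_1 = \beta_2$, $\alpha'_2 \beta'_2 = \beta_1$, $\alpha'_1 \beta'_2 = 0 = \alpha'_2 \beta'_1$. Since $\set{\beta_1, \beta_2}$ is already a basis of $\HomC(M_1, M_2)$ by the original \ref{M4}, the reordered pair $\set{\beta_2, \beta_1}$ serves as the basis $\set{\gamma_1, \gamma_2}$ required for the new triple, with all composition and orthogonality relations demanded by \ref{M4} satisfied on the nose.

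There is no substantial obstacle at this stage: the proof is essentially a bookkeeping assembly. The genuine work has already been done earlier in the section, in establishing the Ext-vanishing propagation (\cref{sesR3}), the dimension counts and directional Hom-vanishing (\cref{homext} and \cref{homextD}), and the canonical bases together with their composition relations (\cref{m'3bases}); this theorem merely repackages those results as the mutability of $\mu_L(T)$ and $\mu_R(T)$.
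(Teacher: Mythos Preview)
Your proposal is correct and matches the paper's approach exactly: the paper's own proof is a single sentence citing \cref{sesR3,homext,homextD,m'3bases}, and your write-up simply spells out how those ingredients verify \ref{M1}--\ref{M4} for $\mu_R(T)$ (with the dual handling $\mu_L(T)$). If anything, you have been more explicit than the paper, which leaves the bookkeeping entirely to the reader.
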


Let $\mathcal{T}(T_0)$ be the set of all mutable triples obtained by iterative applications of $\mu_L$ and $\mu_R$ starting from some triple $T_0$.
\[ \mathcal{T}(T_0) = \{T_0\} \cup \left\{ T \;\left|\; T_0 \stackrel{\mu_1}{\lto}\cdots\stackrel{\mu_n}{\lto}T \text{ where } \mu_i = \mu_L \text{ or } \mu_i = \mu_R \right.\right\}\]
The two operations $\mu_L$ and $\mu_R$ can then be seen as functions $\mathcal{T}(T_0)\to\mathcal{T}(T_0)$. A triple $T$ is said to be \emph{non-initial} if $T\in\mathcal{T}(T_0)\backslash\{T_0\}$ for some $T_0$.

\begin{lemma}\label{mmdichotomy}
A non-initial mutable triple $T = (M_1, M_2, M_3)$ satisfies the dichotomy:
\[ \matmapv{\alpha_1}{\alpha_2} : M_2 \to M_3\oplus M_3 \text{ is a monomorphism}\]
or
\[ \matmaph{\beta_1}{\beta_2} : M_1\oplus M_1 \to M_2 \text{ is an epimorphism}\]
The first case occurs when $T=\mu_R(T')$ and the second when $T=\mu_L(T')$ for some mutable triple $T'$.
\end{lemma}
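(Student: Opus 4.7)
Since $T$ is non-initial, there is a mutable triple $T'$ with $T=\mu_R(T')$ or $T=\mu_L(T')$. The two cases being dual, I sketch only the first. The plan is to identify the map $\smatmapv{\alpha_1}{\alpha_2}$ with the monomorphism appearing in the short exact sequence of \cref{mtses}(a) applied to $T'$, up to an automorphism of the codomain.

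Write $T'=(N_1,N_2,N_3)$, so that $T=\mu_R(T')=(N_1,N_3',N_2)$ and, in particular, $M_2=N_3'$ and $M_3=N_2$. Applying \cref{mtses}(a) to $T'$ gives a short exact sequence
\[ 0\lto N_3' \stackrel{f}{\lto} N_2\oplus N_2 \lto N_3 \lto 0, \]
in which $f$ is, a fortiori, a monomorphism $M_2\to M_3\oplus M_3$. By \cref{m'3bases}(a) applied to $T'$, the components $\alpha_1',\alpha_2'$ of $f$ form a basis of $\HomC(N_3',N_2)=\HomC(M_2,M_3)$.

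Now $\set{\alpha_1,\alpha_2}$ is another basis of this same $2$-dimensional space, so there exist scalars $c_{ij}\in k$ (using $\dim_k\End_{\C}(M_3)=1$ from \ref{M2}) with $\alpha_i=c_{i1}\alpha_1'+c_{i2}\alpha_2'$, and the matrix $(c_{ij})$ is invertible. Letting $\tilde{C}:M_3\oplus M_3\to M_3\oplus M_3$ denote the automorphism $\left[\begin{smallmatrix} c_{11}\mathbb{I}_{M_3} & c_{12}\mathbb{I}_{M_3} \\ c_{21}\mathbb{I}_{M_3} & c_{22}\mathbb{I}_{M_3} \end{smallmatrix}\right]$, one verifies directly that $\smatmapv{\alpha_1}{\alpha_2}=\tilde{C}\circ f$; as $\tilde{C}$ is an isomorphism and $f$ is monic, the composite is monic. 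The case $T=\mu_L(T')$ is handled symmetrically using \cref{mtses}(b) and \cref{m'3bases}(b) to produce the claimed epimorphism $\smatmaph{\beta_1}{\beta_2}:M_1\oplus M_1\to M_2$.

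The only point requiring a moment's care is that $\smatmapv{\alpha_1}{\alpha_2}$ and $\smatmaph{\beta_1}{\beta_2}$ are defined via the bases prescribed by axiom \ref{M4}, which are not canonical, so one must observe that being mono or epi is a basis-independent property. This is automatic since any change of basis induces an automorphism of $M_3\oplus M_3$ (resp.\ $M_1\oplus M_1$) of the scalar-block form above, so the real content of the lemma has already been absorbed into \cref{mtses,m'3bases}.
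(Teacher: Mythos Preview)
Your argument for the positive direction is correct and essentially the same as the paper's: identify the map $\smatmapv{\alpha_1}{\alpha_2}$ with the kernel inclusion $f$ from \cref{mtses}(a) applied to $T'$, up to the change-of-basis automorphism you describe. (Your remark on basis-independence is in fact a shade more careful than the paper, which silently takes the $\alpha'_i$ of \cref{m'3bases} as the $\alpha_i$ of the new triple.)

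However, you have only proved an inclusive ``or''. The word \emph{dichotomy} means the two alternatives are mutually exclusive, and this exclusivity is exactly what the next proposition invokes to conclude $\Img\mu_L\cap\Img\mu_R=\emptyset$. The paper supplies this missing half: assuming $T=\mu_R(T')$ and that $\smatmaph{\beta'_1}{\beta'_2}:M_1\oplus M_1\to M'_3$ is also an epimorphism, one computes, using the relations of \cref{m'3bases}(a),
\[
\matmapv{\alpha'_1}{\alpha'_2}\matmaph{\beta'_1}{\beta'_2}
=\begin{bmatrix}\alpha'_1\beta'_1 & \alpha'_1\beta'_2\\ \alpha'_2\beta'_1 & \alpha'_2\beta'_2\end{bmatrix}
=\begin{bmatrix}\beta_2 & 0\\ 0 & \beta_1\end{bmatrix},
\]
which is a monomorphism. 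Hence $\smatmaph{\beta'_1}{\beta'_2}$ would be monic as well as epic, so an isomorphism $M_1\oplus M_1\cong M'_3$, contradicting $\dim_k\End_{\C}(M'_3)=1$. Without this step (or an equivalent one) your proof does not establish the lemma as stated, and the subsequent disjointness of $\Img\mu_L$ and $\Img\mu_R$ would not follow.
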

\begin{proof}
We will only consider the first case, the second being similar.

Let $T = (M_1, M_2, M_3)$ be a mutable triple and $\mu_R(T) = (M_1, M'_3, M_2)$. Then there is an exact sequence
\[ \SES{M'_3}{\smatmapv{\alpha'_1}{\alpha'_2}}{M_2\oplus M_2}{}{M_3} \]
In particular $\smatmapv{\alpha'_1}{\alpha'_2}$ is a monomorphism.

We have $\HomC(M'_3,M_2) = \langle \beta'_1, \beta'_2\rangle$ where the $\beta'_i$ are as in \cref{m'3bases}.
Now suppose that $\smatmaph{\beta'_1}{\beta'_2}$ is an epimorphism.
\[ \matmapv{\alpha'_1}{\alpha'_2}\matmaph{\beta'_1}{\beta'_2} = \begin{bmatrix} \alpha'_1\beta'_1&\alpha'_1\beta'_2\\ \alpha'_2\beta'_1&\alpha'_2\beta'_2 \end{bmatrix} = \begin{bmatrix} \beta_2&0\\ 0&\beta_1 \end{bmatrix} \]
Since $\beta_1$ and $\beta_2$ are monomorphisms, we obtain the contradiction: $\matmaph{\beta'_1}{\beta'_2} : M_1\oplus M_1 \to M'_3$ is an isomorphism.
\end{proof}

This leads us to define $\mu_C:\mathcal{T}(T_0)\backslash\{T_0\}\to\mathcal{T}(T_0)$ by
\[ \mu_C(M_1, M_2, M_3)= \left\{ \begin{array}{ll} (M_1, M_3, \Coker\smatmapv{\alpha_1}{\alpha_2}) & \mbox{ if $\smatmapv{\alpha_1}{\alpha_2}$ is a monomorphism} \\
                                     (\Ker\smatmaph{\beta_1}{\beta_2},M_1,M_3) & \mbox{ if $\smatmaph{\beta_1}{\beta_2}$ is an epimorphism}
                                 \end{array} \right. \]
The operations $\mu_L$, $\mu_C$ and $\mu_R$ can be seen as analogues of the mutations of a tilting object in a cluster category as defined in \cite{BMRRT}.
These maps satisfy $\mu_C\mu_L = \mathbb{I}_{\mathcal{T}(T_0)}$ and $\mu_C\mu_R = \mathbb{I}_{\mathcal{T}(T_0)}$ as will be shown in the following proposition:

\begin{proposition}
The triple $(\mathcal{T}(T_0), \mu_L, \mu_R)$ is a binary tree rooted in $T_0$.
\end{proposition}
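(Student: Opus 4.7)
The plan is to verify the defining conditions of a binary tree from Section 2.1 for $(\mathcal{T}(T_0),\mu_L,\mu_R)$: namely, countable infinity of the underlying set, injectivity of $\mu_L$ and $\mu_R$, uniqueness of the root $T_0$ (i.e.\ $T_0\notin\Img\mu_L\cup\Img\mu_R$), disjointness of the images, and reachability of every element from $T_0$. All of the heavy algebraic lifting has been done in the preceding lemmas, so the proof amounts to assembling the pieces.

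First, I would establish the identities $\mu_C\mu_L=\mathbb{I}_{\mathcal{T}(T_0)}$ and $\mu_C\mu_R=\mathbb{I}_{\mathcal{T}(T_0)}$ asserted in the paragraph preceding the statement. For $\mu_R$, given $T=(M_1,M_2,M_3)$, the short exact sequence
\[\SES{M'_3}{f}{M_2\oplus M_2}{g}{M_3}\]
from \cref{mtses} together with \cref{m'3bases} identifies the components of $f$ as the basis $\set{\alpha'_1,\alpha'_2}$ of $\HomC(M'_3,M_2)$, so that $f=\smatmapv{\alpha'_1}{\alpha'_2}$ is a monomorphism. This places $\mu_R(T)=(M_1,M'_3,M_2)$ in the monomorphism branch of \cref{mmdichotomy}, whence
\[\mu_C(\mu_R(T))=(M_1,M_2,\Coker f)=(M_1,M_2,M_3)=T.\]
The identity $\mu_C\mu_L=\mathbb{I}_{\mathcal{T}(T_0)}$ is shown dually using part (b) of \cref{mtses}. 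Injectivity of $\mu_L$ and $\mu_R$ then follows immediately from the existence of these left inverses.

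Next, $\Img\mu_L\cap\Img\mu_R=\emptyset$ is a consequence of the exclusivity argument contained in the proof of \cref{mmdichotomy}: any $T\in\Img\mu_R$ has $\smatmapv{\alpha_1}{\alpha_2}$ monic while any $T\in\Img\mu_L$ has $\smatmaph{\beta_1}{\beta_2}$ epic, and the concluding paragraph of that proof rules out both holding simultaneously. Reachability of every element of $\mathcal{T}(T_0)$ from $T_0$ is immediate from the inductive definition, and countable infiniteness follows once $T_0\notin\Img\mu_L$ has been established, since then the orbit $T_0,\mu_L(T_0),\mu_L^2(T_0),\ldots$ consists of infinitely many pairwise distinct elements by injectivity of $\mu_L$, while at-most-countability is built into the word-indexed construction of $\mathcal{T}(T_0)$.

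The main obstacle, in my view, is the uniqueness of the root: proving $T_0\notin\Img\mu_L\cup\Img\mu_R$. My strategy would be the contrapositive of \cref{mmdichotomy}: if $T_0$ were in either image it would be non-initial, so one of the two dichotomy conditions would hold at $T_0$, namely $\smatmapv{\alpha_1}{\alpha_2}$ being monic or $\smatmaph{\beta_1}{\beta_2}$ being epic. It therefore suffices to verify that neither holds for the specific $T_0$ generating the tree. Since this is not forced by the abstract axioms of a mutable triple, it will have to be imposed as a hypothesis on initial triples or checked by hand for the concrete Markoff module triple introduced later, in analogy with the direct verification used for $(1,5,2)$ in the Markoff triple case.
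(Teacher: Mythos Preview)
Your approach is essentially the same as the paper's: both use \cref{mmdichotomy} for disjointness of the images and establish $\mu_C\mu_R=\mathbb{I}_{\mathcal{T}(T_0)}$ (and dually for $\mu_L$) via the exact sequence of \cref{mtses} and \cref{m'3bases} to obtain injectivity. You are in fact more thorough than the paper's own proof, which does not address the root condition $T_0\notin\Img\mu_L\cup\Img\mu_R$ at all; your observation that this is not forced by the abstract mutable-triple axioms and must be checked for the specific $T_0$ at hand is correct.
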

\begin{proof}
By \cref{mmdichotomy}, $\Img\mu_L\cap\Img\mu_R = \emptyset$.
To establish that $\mu_R$ is injective, it is sufficient to show that $\mu_C\mu_R = \mathbb{I}_{\mathcal{T}(T_0)}$.
Let $T=(M_1, M_2, M_3) \in \mathcal{T}(T_0)$, then $\mu_R(T)=T'=(M_1, M'_3, M_2)$ with 
$\HomC(M'_3,M_2) = \langle\alpha'_1, \alpha'_2\rangle$ as in \cref{m'3bases}. By \cref{mmdichotomy}, $\smatmapv{\alpha'_1}{\alpha'_2}$ is a monomorphism. Thus $\mu_C(T') = (M_1, M_2, \Coker\smatmapv{\alpha'_1}{\alpha'_2})$. But since $T'=\mu_R(T)$ there is an exact sequence
\[ \SES{M'_3}{\smatmapv{\alpha'_1}{\alpha'_2}}{M_2\oplus M_2}{}{M_3} \]
From which we conclude that $\mu_C(\mu_R(T)) = T$. The injectivity $\mu_L$ is shown similarly.
\end{proof}

Consider the string algebra given by the quiver
\begin{equation*}
\xymatrix{ 2 \ar@<2pt>[r]^{\alpha} \ar@<-2pt>[r]_{\gamma} & 1
\ar@<2pt>[r]^{\beta} \ar@<-2pt>[r]_{\delta} & 3}
\end{equation*} bound by $\alpha\beta = 0$ and
$\gamma\delta=0$. 
Consider the strings $w_1 = \varepsilon_1$, $w_2 = \alpha^{-1}\gamma\beta\delta^{-1}\alpha^{-1}\gamma$ and $w_3 = \alpha^{-1}\gamma$.
\begin{equation*}
\xymatrix @R=0.75pc @C=0.75pc{
      &   & 2 \ar[ld]_\alpha \ar[rd]^\gamma &                 &   &                  & 2 \ar[ld]_\alpha \ar[rd]^\gamma &   \\
w_2 = & 1 &                                 & 1 \ar[rd]_\beta &   & 1 \ar[ld]^\delta &                                 & 1 \\
      &   &                                 &                 & 3 &                  &                                 &   \\
}
\qquad
\xymatrix @R=0.75pc @C=0.75pc{
      &   & 2 \ar[ld]_\alpha \ar[rd]^\gamma &   \\
w_3 = & 1 &                                 & 1 \\
      &   &                                 &   \\
} 
\end{equation*}
The corresponding string modules form a mutable triple $T_0 = (M(w_1),M(w_2),M(w_3))$. In this case, an element of $\mathcal{T}(T_0)$ will be called a \emph{Markoff module triple} and a module belonging to such a triple will be called a \emph{Markoff module}. Any Markoff module appearing as the middle term of a Markoff module triple is said to be \emph{proper}.

We know turn our attention to the behavior of Markoff module triples under the operations $\mu_R$ and $\mu_L$.
For a module $M$, let $\dimv M$ denote its dimension vector. Note that when $M(w)$ is a string module with $w=a_1a_2\cdots a_ n$, we have
$\dimv M(w) = \left( \delta_{t(a_n),j} + \sum_{i=1}^n \delta_{s(a_i),j}\right)_{j\in Q_0}$ and 
$\dimv M(\varepsilon_i) = \left( \delta_{i,j}\right)_{j\in Q_0}$ where $\delta_{i,j}$ is the Kronecker delta.

\begin{corollary}\label{mmdimv}
Let $T=(M_1,M_2,M_3)$ be a Markoff module triple.
\begin{enumerate}
  \item If $\mu_R(T) = (M_1,M'_3,M_2)$, then $\dimv M'_3 = 2 \dimv M_2 - \dimv M_3$.
  \item If $\mu_L(T) = (M_2,M'_1,M_3)$, then $\dimv M'_1 = 2 \dimv M_2 - \dimv M_1$.
\end{enumerate}
Moreover, if $M$ is a Markoff module with $\dimv M = (a,b,c)$ then $a-b-c=1$.
\end{corollary}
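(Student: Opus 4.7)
The plan is to derive parts (a) and (b) as immediate consequences of dimension vector additivity applied to the two short exact sequences of \cref{mtses}, then bootstrap to the invariant $a-b-c=1$ by induction on the binary tree $(\mathcal{T}(T_0),\mu_L,\mu_R)$.

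For (a), the short exact sequence
\[ \SES{M'_3}{f}{M_2\oplus M_2}{g}{M_3} \]
from \cref{mtses}(a) gives $\dimv M'_3 + \dimv M_3 = 2\,\dimv M_2$ by additivity of $\dimv$ on short exact sequences, which rearranges directly to the stated equality. Statement (b) follows identically from the dual exact sequence in \cref{mtses}(b).

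For the final claim, I plan to induct on the distance from the root $T_0 = (M(w_1),M(w_2),M(w_3))$ in $\mathcal{T}(T_0)$. The base case is a direct computation: the formula for $\dimv M(w)$ recalled just above the corollary yields $\dimv M(w_1) = (1,0,0)$, $\dimv M(w_3) = (2,1,0)$, and $\dimv M(w_2) = (4,2,1)$, each satisfying $a-b-c=1$. For the inductive step, observe that $(a,b,c)\mapsto a-b-c$ is a linear functional on $\mathbb{Z}^3$, so parts (a) and (b) transport the invariant through a single mutation: if every component of $T=(M_1,M_2,M_3)$ satisfies $a-b-c=1$, then the new module $M'_3$ appearing in $\mu_R(T)$ has $a-b-c = 2\cdot 1 - 1 = 1$, and the analogous computation handles $M'_1$ in $\mu_L(T)$. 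Since every Markoff module appears as a component of some triple of $\mathcal{T}(T_0)$, the conclusion propagates throughout the tree.

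I do not foresee any genuine obstacle here; the only point requiring care is the explicit evaluation of the three base-case dimension vectors, which demands careful bookkeeping of the sources and targets of the arrows and formal inverses making up $w_1$, $w_2$, $w_3$.
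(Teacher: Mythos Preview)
Your proposal is correct and matches the paper's own proof essentially line for line: parts (a) and (b) come from additivity of $\dimv$ on the exact sequences of \cref{mtses}, and the invariant $a-b-c=1$ is established by induction along the tree, checking the initial triple $T_0$ and using (a), (b) in the inductive step. The only cosmetic difference is that you spell out the base-case dimension vectors explicitly, whereas the paper leaves that verification to the reader.
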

\begin{proof}
Properties (a) and (b) are a direct consequence of \cref{mtses}. As for the second claim, the condition holds for every module in the initial triple  $T_0$. Suppose that it is also true for the modules of some triple $T=(M_1,M_2,M_3)$. Consider $\mu_L(T) = (M_2,M'_1,M_3)$ and let $\dimv M'_1 = (a,b,c)$, $\dimv M_1 = (a_1,b_1,c_1)$ and $\dimv M_2 = (a_2,b_2,c_2)$. Then by (b) we have
\begin{align*}
  a-b-c &= (2a_2-a_1) - (2b_2-b_1) - (2c_2-c_1) \\
        &= 2 (a_2-b_2-c_2) - (a_1-b_1-c_1) \\
        &= 2\cdot 1 - 1 = 1
\end{align*}
The proof for $\mu_R(T)$ is similar.
\end{proof} 

\begin{proposition}\label{mmmtR2}
If $T=(M(w_1),M(w_2),M(w_3))$ is a Markoff module triple, then
\begin{enumerate}
  \item $w_2 = w_3u_1 = u_2w_3$ for some strings $u_1, u_2$ and where both occurrences of $w_3$ are as factor strings of $w_2$.
  \item $w_2 = w_1v_1 = v_2w_1$ for some strings $v_1, v_2$ and where both occurrences of $w_1$ are as substrings of $w_2$.
\end{enumerate}
Moreover, \[\mu_R(T) = (M(w_1),M(w'_3),M(w_2)) \text{ where } w'_3 = w_2u_1 = u_2w_2\] and
          \[\mu_L(T) = (M(w_2),M(w'_1),M(w_3)) \text{ where } w'_1 = w_2v_1 = v_2w_2\]
          where $u_1,u_2,v_1$ and $v_2$ are as above.
\end{proposition}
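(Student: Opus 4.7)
The plan is to proceed by induction on the depth of $T$ in the tree $(\mathcal{T}(T_0), \mu_L, \mu_R)$, establishing simultaneously that every triple satisfies (a) and (b) and that $\mu_R$, $\mu_L$ produce the announced modules. The base case $T_0 = (M(\varepsilon_1), M(w_2), M(w_3))$ is verified directly from the factorisations
\[
w_2 \;=\; w_3 \cdot (\beta\delta^{-1}\alpha^{-1}\gamma) \;=\; (\alpha^{-1}\gamma\beta\delta^{-1}) \cdot w_3.
\]
The first occurrence of $w_3$ in $w_2$ has empty prefix and a suffix beginning with the arrow $\beta$, while the second has a prefix ending in the inverse arrow $\delta^{-1}$ and empty suffix, so both are factor-string occurrences. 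Property (b) holds trivially since $w_1 = \varepsilon_1$ has length zero and $w_2$ starts with $\alpha^{-1}$ and ends with $\gamma$.

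For the inductive step, assume $T = (M(w_1), M(w_2), M(w_3))$ satisfies the proposition; I treat $\mu_R(T)$, the $\mu_L$ case being symmetric. By Crawley-Boevey's theorem combined with $\dim_k \HomC(M(w_2), M(w_3)) = 2$ from \ref{M4}, the two factor-string occurrences of $w_3$ in $w_2$ from (a) furnish a basis $\{\alpha_1, \alpha_2\}$ of epimorphisms $M(w_2) \to M(w_3)$, and up to a change of basis these assemble into the minimal right approximation $g = \matmaph{\alpha_1}{\alpha_2}$ of \cref{mtses}. Define $w'_3 := u_2 w_3 u_1$; since $u_2 w_3 = w_2 = w_3 u_1$, the two junctions in $w'_3$ already appear inside the valid string $w_2$, so $w'_3$ is itself a valid string (every length-two subsequence lies in $w_2$) and trivially $w'_3 = w_2 u_1 = u_2 w_2$. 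The central task is to identify $M'_3 = \Ker g \cong M(w'_3)$: I would exhibit an explicit isomorphism by describing how the string basis of $M(w'_3)$, indexed by positions along $u_2 w_3 u_1$, embeds into $M(w_2) \oplus M(w_2)$. Positions in the $u_2$ part go into the first summand, positions in the $u_1$ part into the second, and positions in the overlapping $w_3$ part into appropriately matched diagonal combinations whose components in $M(w_3)$ cancel under $g$. The dimension vector match $\dimv M(w'_3) = 2\dimv M(w_2) - \dimv M(w_3) = \dimv \Ker g$ from \cref{mmdimv} then forces the embedding to be an isomorphism onto $\Ker g$, yielding $\mu_R(T) = (M(w_1), M(w'_3), M(w_2))$ with $w'_3 = w_2 u_1 = u_2 w_2$.

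The new factorisations (a) and (b) for the triple $\mu_R(T)$ then follow readily: property (a) is the formula for $w'_3$ just proved, and its boundary conditions hold because (a) for $T$ forces $u_1$ to begin with an arrow and $u_2$ to end with an inverse arrow; for (b), using $w_2 = w_1 v_1 = v_2 w_1$ from the inductive hypothesis gives $w'_3 = w_1(v_1 u_1) = (u_2 v_2) w_1$, and the substring conditions on $w_1$ persist because the characters of $w'_3$ immediately adjacent to $w_1$ coincide with those in $w_2$. The symmetric argument for $\mu_L$ produces $w'_1 = v_2 w_1 v_1 = v_2 w_2 = w_2 v_1$. The main obstacle is the kernel identification step, which requires careful combinatorial bookkeeping: the two natural factor-string epimorphisms $M(w'_3) \to M(w_2)$ do not themselves combine to a monomorphism into $M(w_2) \oplus M(w_2)$, so one must construct the embedding by matching basis vectors on the overlap region $w_3$ with appropriate signs so that the image sits inside $\Ker g$ and the action of every arrow of $Q$ is respected.
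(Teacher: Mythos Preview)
Your inductive scheme and the derivation of the new factorisations match the paper's proof exactly. The only divergence is in the kernel identification, where you make the argument harder than it needs to be and end with an incorrect claim. The paper simply takes the two factor-string epimorphisms $\alpha'_1:w'_3=w_2u_1\twoheadrightarrow w_2$ and $\alpha'_2:w'_3=u_2w_2\twoheadrightarrow w_2$, observes that $\alpha_1\alpha'_1=\alpha_2\alpha'_2$ (both equal the projection onto the central copy of $w_3$), and sets $f=\smatmapv{\alpha'_1}{-\alpha'_2}$. Then $gf=0$ by the sign, and $f$ \emph{is} a monomorphism: $\Ker\alpha'_1$ is spanned by the basis vectors sitting over the terminal segment $u_1$, while $\Ker\alpha'_2$ is spanned by those over the initial segment $u_2$, and since $|u_1|+|u_2|<|w'_3|+1$ these subspaces meet trivially. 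Your claim in the last paragraph that ``the two natural factor-string epimorphisms do not themselves combine to a monomorphism'' is therefore wrong; the obstacle you anticipate does not exist, and the explicit basis embedding you describe is nothing other than $f=\smatmapv{\alpha'_1}{-\alpha'_2}$ written in coordinates. A dimension count (your appeal to \cref{mmdimv}) then gives exactness on the right.
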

\begin{proof}The conditions can easily be checked for the initial triple $T_0$. Now suppose the properties hold for some $T=(M(w_1),M(w_2),M(w_3))$. Consider the maps $\alpha_1,\alpha_2:M(w_2)\to M(w_3)$ where $\alpha_1:w_2=w_3u_1\twoheadrightarrow w_3$ and 
$\alpha_2:w_2=u_2w_3\twoheadrightarrow w_3$. Clearly $g=\matmaph{\alpha_1}{\alpha_2}:M(w_2)\oplus M(w_2)\to M(w_3)$ is an epimorphism. Let $w'_3 = w_2u_1$, then we also have $w'_3 = u_2w_2$ since $w_2u_1 = u_2w_3u_1 = u_2w_2$. We define $\alpha'_1,\alpha'_2:M(w'_3)\to M(w_2)$ by $\alpha'_1:w'_3=w_2u_1\twoheadrightarrow w_2$ and $\alpha'_2:w'_3=u_2w_2\twoheadrightarrow w_2$. We have $\alpha_1 \alpha'_1 = \alpha_2 \alpha'_2$ and $f=\matmapv{\alpha'_1}{-\alpha'_2}:M(w'_3)\to M(w_2)\oplus M(w_2)$ is a monomorphism. Consequently, we obtain an exact sequence
\[ \SES{M(w'_3)}{f}{M(w_2)\oplus M(w_2)}{g}{M(w_3)} \]
By construction this is the exact sequence of \cref{mtses} and so $\mu_R(T) = (M(w_1),M(w'_3),M(w_2))$. 
Furthermore, we note that $w'_3 = w_2u_1 = w_1(v_1u_1)$ and $w'_3 = u_2w_2 = (u_2v_2)w_1$.
The proof for $\mu_L$ is similar.
\end{proof} 

\section{Christoffel Words and Markoff modules}

To every Markoff module $M$ we associate a pair of integers $\delta(M)$ in the following way: If $M$ has dimension vector $(a,b,c)$ then $M\mapsto \delta(M) = (a-2b+c, b-c)$.

\begin{lemma}\label{matgcd}
Let $a,b,c,d \in\mathbb{Z}$. If $\det \left[\begin{smallmatrix} a&b\\ c&d \end{smallmatrix}\right]=1$, then $\gcd(a+c,b+d) = 1$.
\end{lemma}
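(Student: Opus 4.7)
The plan is to exhibit an explicit $\mathbb{Z}$-linear combination of $a+c$ and $b+d$ that equals $1$, from which the coprimality assertion follows immediately. The hypothesis $ad-bc=1$ naturally suggests pairing $d$ with $a+c$ and $c$ with $b+d$, since this reproduces the determinant while canceling the cross terms.

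Concretely, I would compute
\[ d(a+c) - c(b+d) = ad + cd - bc - cd = ad - bc = 1. \]
Any common divisor of $a+c$ and $b+d$ must divide the left-hand side, and hence must divide $1$. Therefore $\gcd(a+c, b+d) = 1$, as claimed.

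There is no real obstacle here; the only thing to notice is the right pairing of coefficients. As a sanity check, the symmetric identity $a(b+d) - b(a+c) = ad - bc = 1$ also works, so there is nothing delicate. A one-line calculation suffices.
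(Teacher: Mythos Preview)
Your proof is correct and essentially identical to the paper's: both exhibit the B\'ezout relation $d(a+c)-c(b+d)=ad-bc=1$ and conclude coprimality from it. There is nothing to add.
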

\begin{proof}
We have
\[ \det \left[\begin{smallmatrix} a&b\\ c&d \end{smallmatrix}\right] = ad-bc = (ad+cd)-(bc+cd) = d(a+c) - c(b+d) \]
The claim follows from B\'{e}zout's lemma.
\end{proof}

\begin{lemma}\label{cwmm}
Let $(M_1, M_2, M_3)$ be a Markoff module triple and let $\delta(M_i) = (x_i,y_i)$. Then
\begin{enumerate}[ref=\cref{cwmm}(\alph{*})]
  \item $\delta(M_2) = \delta(M_1) + \delta(M_3)$.
  \item $\det \left[\begin{smallmatrix} x_1&y_1\\ x_3&y_3 \end{smallmatrix}\right] = 1$.
  \item $\gcd(x_i,y_i) = 1$ for each $i=1,2,3$. \label{cwmmrp}
\end{enumerate}
\end{lemma}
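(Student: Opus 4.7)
The plan is to prove all three claims simultaneously by induction on the tree structure of $\mathcal{T}(T_0)$, i.e., on the word length of the element $f \in \set{\mu_L, \mu_R}^*$ that produces the triple from the root $T_0$. The key observation is that, by \cref{mmdimv}, the dimension vectors of Markoff modules in a mutated triple are integer linear combinations of those in the parent triple, and so the assignment $M \mapsto \delta(M)$ transforms the same way. In particular, the map $\delta$ is $\mathbb{Z}$-linear in the dimension vector, so the mutation rules $\dimv M'_3 = 2\dimv M_2 - \dimv M_3$ and $\dimv M'_1 = 2\dimv M_2 - \dimv M_1$ translate immediately to $\delta(M'_3) = 2\delta(M_2) - \delta(M_3)$ and $\delta(M'_1) = 2\delta(M_2) - \delta(M_1)$.

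For the base case, one checks directly from the strings $w_1, w_2, w_3$ defining $T_0$ that $\dimv M(w_1) = (1,0,0)$, $\dimv M(w_2) = (4,2,1)$, $\dimv M(w_3) = (2,1,0)$, hence $\delta(M(w_1)) = (1,0)$, $\delta(M(w_2)) = (1,1)$ and $\delta(M(w_3)) = (0,1)$. All three assertions are then immediate: (a) reads $(1,1) = (1,0) + (0,1)$, (b) reads $\det\left[\begin{smallmatrix}1&0\\0&1\end{smallmatrix}\right] = 1$, and (c) holds trivially.

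For the inductive step, suppose (a), (b), (c) hold for $T = (M_1, M_2, M_3)$, and consider $\mu_R(T) = (M_1, M'_3, M_2)$. Writing $\delta(M_i) = (x_i, y_i)$, (a) gives $(x_2, y_2) = (x_1 + x_3, y_1 + y_3)$, so $\delta(M'_3) = 2\delta(M_2) - \delta(M_3) = 2\delta(M_1) + \delta(M_3)$. One then verifies (a) for $\mu_R(T)$ by computing $\delta(M_1) + \delta(M_2) = 2\delta(M_1) + \delta(M_3) = \delta(M'_3)$. For (b), the relevant determinant is
\[ \det\begin{bmatrix} x_1 & y_1 \\ x_1 + x_3 & y_1 + y_3 \end{bmatrix} = x_1 y_3 - y_1 x_3 = \det\begin{bmatrix} x_1 & y_1 \\ x_3 & y_3 \end{bmatrix} = 1 \]
by row reduction, using the inductive hypothesis. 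The computation for $\mu_L(T) = (M_2, M'_1, M_3)$ is analogous: one finds $\delta(M'_1) = \delta(M_2) + \delta(M_3)$, and the determinant reduces via a column operation to that of the parent triple.

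Finally, claim (c) follows from (b) without further induction. Given that $x_1 y_3 - y_1 x_3 = 1$, any common divisor of $x_1$ and $y_1$ (respectively $x_3$ and $y_3$) divides $1$, so $\gcd(x_1, y_1) = \gcd(x_3, y_3) = 1$ by B\'ezout. For the middle term, $(x_2, y_2) = (x_1 + x_3, y_1 + y_3)$ together with (b) and \cref{matgcd} yield $\gcd(x_2, y_2) = 1$. The only step requiring any thought is confirming the base case arithmetic; the inductive step is a purely formal $2 \times 2$ determinant manipulation, and the main conceptual observation — that the linear structure of $\delta$ mirrors that of the dimension vectors — is already supplied by \cref{mmdimv}.
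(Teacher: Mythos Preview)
Your proof is correct and follows essentially the same inductive approach as the paper: verify the base case by direct computation of the $\delta(M_i)$, then propagate (a) and (b) through a mutation using \cref{mmdimv} and a row operation on the $2\times 2$ determinant, deducing (c) from (b) via B\'ezout and \cref{matgcd}. The only difference is cosmetic---you carry out the inductive step for $\mu_R$ while the paper does $\mu_L$---and your remark that the $\mu_L$ determinant reduces ``via a column operation'' should read ``row operation,'' but this is an inconsequential slip.
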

\begin{proof}
For the initial triple we have $\delta(M_1) = (1,0)$, $\delta(M_2) = (1,1)$ and $\delta(M_3) = (0,1)$. The three conditions clearly hold.
Suppose that a triple $T = (M_1, M_2, M_3)$ satisfies all the conditions. We will show the same is true for $\mu_L(T) = (M_2, M'_1, M_3)$; the proof for $\mu_R(T)$ is similar. 

Let $\delta(M'_1)=(x,y)$, $\dimv M_i = (a_i, b_i, c_i)$ and $\dimv M'_1 = (a, b, c)$. By \cref{mmdimv}, we have $\dimv M'_1 = 2\dimv M_2 - \dimv M_1$ and so 
\begin{align*}
  x = a - 2b + c &= (2a_2-a_1) - 2(2b_2-b_1) + (2c_2-c_1) \\
                 &= 2 (a_2-2b_2+c_2) - (a_1-2b_1+c_1) \\
                 &= 2 x_2 - x_1
\end{align*}
and
\begin{align*}
  y = b - c &= (2b_2-b_1) - (2c_2-c_1) \\
            &= 2 (b_2-c_2) - (b_1-c_1) \\
            &= 2 y_2 - y_1
\end{align*}
By the inductive hypothesis, $x_2 = x_1 + x_3$ and $y_2 = y_1 + y_3$. Hence $2x_2-x_1 = x_2+x_3$ and $2y_2-y_1=y_2+y_3$. We get
\[ \delta(M'_1)=(2 x_2 - x_1, 2 y_2 - y_1) = (x_2+x_3,y_2+y_3) = \delta(M_2)+\delta(M_3) \]
Moreover,
\[ \left|\begin{array}{rr} x_2&y_2\\ x_3&y_3 \end{array}\right| = x_2y_3-y_2x_3 = (x_1 + x_3)y_3 - (y_1 + y_3)x_3 = \left|\begin{array}{rr} x_1&y_1\\ x_3&y_3 \end{array}\right| = 1 \]
It follows from \cref{matgcd} that $\gcd(x,y)=1$.
\end{proof}

By \ref{cwmmrp}, we can associate to each Markoff module $M$ the Christoffel word $C(\delta(M))$.
\begin{proposition}\label{mtcwsf}
If $(M_1, M_2, M_3)$ is a Markoff module triple then $C(\delta(M_2)) = C(\delta(M_1))C(\delta(M_3))$ is the standard factorisation of the Christoffel word $C(\delta(M_2))$.
\end{proposition}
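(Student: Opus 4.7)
The plan is to combine the arithmetic information from \cref{cwmm} with the Borel-Laubie characterisation (\cref{cwcon}) to identify the concatenation $C(\delta(M_1))C(\delta(M_3))$ as a Christoffel word, and then apply uniqueness to recognise this concatenation as the standard factorisation of $C(\delta(M_2))$.

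First, \cref{cwmm}(b) and \ref{cwmmrp} guarantee that $\delta(M_1)$ and $\delta(M_3)$ are pairs of coprime integers with $\det\left[\begin{smallmatrix} x_1 & y_1 \\ x_3 & y_3 \end{smallmatrix}\right]=1$, so $C(\delta(M_1))$ and $C(\delta(M_3))$ are well-defined Christoffel words and \cref{cwcon} makes their concatenation a Christoffel word as well. This concatenation traces a lattice path from $(0,0)$ to $\delta(M_1)+\delta(M_3)=\delta(M_2)$ by \cref{cwmm}(a), and since a Christoffel word is uniquely determined by its endpoint, we obtain $C(\delta(M_1))C(\delta(M_3))=C(\delta(M_2))$.

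To conclude that this is the \emph{standard} factorisation, it remains to verify that $C(\delta(M_2))$ is proper; the uniqueness part of the Borel-Laubie standard factorisation theorem will then finish the argument, since both factors are Christoffel words. I would establish properness by a short induction on $\mathcal{T}(T_0)$: in the initial triple $\delta(M_2)=(1,1)$ has both entries strictly positive, and if the Markoff module triple $T=(M_1,M_2,M_3)$ enjoys this property then the new middle of $\mu_L(T)=(M_2,M'_1,M_3)$ satisfies $\delta(M'_1)=\delta(M_2)+\delta(M_3)$, which is the sum of a pair with both entries $\geq 1$ and a pair with non-negative entries and therefore also has both entries $\geq 1$; the case of $\mu_R$ is symmetric. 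This rules out $\delta(M_2)\in\set{(1,0),(0,1)}$, so $C(\delta(M_2))$ is distinct from $x$ and $y$. The essential content of the statement is already contained in \cref{cwmm}, and the remaining work is simply the assembly of the two uniqueness inputs (for Christoffel paths and for standard factorisations), so no serious obstacle is anticipated.
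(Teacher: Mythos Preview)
Your argument is correct and is essentially the paper's own proof spelled out: the paper simply records the result as ``a consequence of \cref{cwmm,cwcon}'', and you have supplied exactly those details (the determinant condition from \cref{cwmm}(b) feeds into \cref{cwcon}, the additivity \cref{cwmm}(a) identifies the endpoint, and the Borel--Laubie uniqueness statement pins down the factorisation as standard). One small redundancy: your separate induction for properness is not needed, since once $C(\delta(M_2))=C(\delta(M_1))C(\delta(M_3))$ is established as a concatenation of two (nonempty) Christoffel words it automatically has length $\geq 2$ and is therefore proper.
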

\begin{proof}
A consequence of \cref{cwmm,cwcon}.
\end{proof}

Let $\mathcal{T}$ be the set of Markoff module triples and $\mathcal{C}$ the set of Christoffel triples. 
We define a map $F:\mathcal{T}\to\mathcal{C}$ by \[(M_1, M_2, M_3) \mapsto (C(\delta(M_1)), C(\delta(M_2)), C(\delta(M_3)))\]
\begin{theorem}The following diagrams commute
\[\xymatrix{
\mathcal{T} \ar[r]^{F} \ar[d]_{\mu_{L}} & \mathcal{C} \ar[d]^{c_L} & & \mathcal{T} \ar[r]^{F} \ar[d]_{\mu_{R}} & \mathcal{C} \ar[d]^{c_R} \\
\mathcal{T} \ar[r]^{F}                  & \mathcal{C}              & & \mathcal{T} \ar[r]^{F}                  & \mathcal{C}              \\
}\]
Moreover, $F$ is a binary tree isomorphism.
\end{theorem}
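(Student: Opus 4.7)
The plan is to verify the commutativity of the two diagrams and then appeal to \cref{btiso}, which reduces the isomorphism claim to checking that $F$ sends the root of $\mathcal{T}$ to the root of $\mathcal{C}$.

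For the left square, I would take a Markoff module triple $T = (M_1, M_2, M_3)$ with $\mu_L(T) = (M_2, M'_1, M_3)$ and apply \cref{mtcwsf} to both triples. For $T$ this produces the standard factorisation $C(\delta(M_2)) = C(\delta(M_1)) C(\delta(M_3))$; for $\mu_L(T)$, viewed as a Markoff module triple whose middle term is $M'_1$, it produces $C(\delta(M'_1)) = C(\delta(M_2)) C(\delta(M_3))$. This second identity is exactly what is needed to rewrite
\[ F(\mu_L(T)) = \bigl( C(\delta(M_2)),\, C(\delta(M'_1)),\, C(\delta(M_3)) \bigr) \]
as $c_L(F(T))$. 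The right square is entirely parallel: applying \cref{mtcwsf} to $\mu_R(T) = (M_1, M'_3, M_2)$ gives $C(\delta(M'_3)) = C(\delta(M_1)) C(\delta(M_2))$, which matches the definition of $c_R$. No separate invocation of \cref{mmdimv} or \cref{cwmm} is needed here, as \cref{mtcwsf} has already packaged them.

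For the isomorphism claim, I would then invoke \cref{btiso}, which requires only $F(T_0) = (x, xy, y)$. This reduces to a short dimension-vector computation on the three initial strings: reading $\dimv M(w_i)$ off the diagrams and applying $\delta(M) = (a-2b+c, b-c)$, one gets $\delta$-values $(1,0)$, $(1,1)$, $(0,1)$ for $w_1, w_2, w_3$, which correspond to the Christoffel words $x$, $xy$, $y$ respectively.

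I do not anticipate a substantive obstacle here, since \cref{mmdimv,cwmm,mtcwsf} have already absorbed the structural content of the statement. The only point requiring care is bookkeeping: one must track which entry of the mutated triple plays the role of the ``middle term'' when invoking \cref{mtcwsf} on $\mu_L(T)$ or $\mu_R(T)$, so that the resulting standard factorisation is matched against the correct component of $c_L$ or $c_R$.
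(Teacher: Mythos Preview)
Your proposal is correct and follows essentially the same route as the paper: both use \cref{mtcwsf} applied to the mutated triples to establish commutativity, and both invoke \cref{btiso} to reduce the isomorphism claim to the root computation $F(T_0) = (x,xy,y)$. Your write-up simply spells out the bookkeeping a bit more explicitly than the paper does.
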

\begin{proof}
Let $T=(M_1, M_2, M_3)\in\mathcal{T}$, $\mu_L(T) = (M_2,M'_1,M_3)$ and $\mu_R(T) = (M_1,M'_3,M_2)$. 
The commutativity follows from the fact that $C(\delta(M'_1)) = C(\delta(M_2))C(\delta(M_3))$ and $C(\delta(M'_3)) = C(\delta(M_1))C(\delta(M_2))$ by \cref{mtcwsf}.

To show that $F$ is an isomorphism, it suffices, by \cref{btiso}, to show that $F(\text{root } \mathcal{T}) = \text{root }\mathcal{C}$. We have
\[F(\text{root } \mathcal{T}) = (C(1,0), C(1,1), C(0,1)) = (x,xy,y) = \text{root }\mathcal{C}\]
\end{proof}

\begin{corollary}\label{mmtun}
Every Markoff module triple is uniquely determined by its middle term.
\end{corollary}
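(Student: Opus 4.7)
The plan is to deduce the corollary directly from the fact that $F:\mathcal{T}\to\mathcal{C}$ is a binary tree isomorphism, by transporting the uniqueness question from Markoff module triples to Christoffel triples, where it is already known.

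First, suppose that $(M_1,M_2,M_3)$ and $(N_1,M_2,N_3)$ are two Markoff module triples sharing the same middle term $M_2$. Applying $F$ yields two Christoffel triples
\[ (C(\delta(M_1)),\, C(\delta(M_2)),\, C(\delta(M_3))) \quad\text{and}\quad (C(\delta(N_1)),\, C(\delta(M_2)),\, C(\delta(N_3))) \]
whose middle Christoffel words coincide. By the definition of $\mathcal{C}$, in each such triple the middle word factors as the concatenation of the outer two, and this is the standard factorisation. The uniqueness of the standard factorisation of a proper Christoffel word (the Borel--Laubie result recalled in the Preliminaries) then forces
\[ C(\delta(M_1)) = C(\delta(N_1)) \quad\text{and}\quad C(\delta(M_3)) = C(\delta(N_3)), \]
so the two Christoffel triples are equal. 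Since $F$ is a bijection, the two Markoff module triples must be equal as well, giving $M_1 \cong N_1$ and $M_3 \cong N_3$.

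I expect no real obstacle here: the heavy lifting has already been done in establishing that $F$ is a binary tree isomorphism, and the uniqueness of the standard factorisation is imported verbatim from the Christoffel-word literature. The only subtlety worth a brief sentence is the initial (non-proper) triple $T_0$, whose image $(x,xy,y)$ has middle word $xy$ and whose factorisation $xy = x\cdot y$ is also the unique Christoffel factorisation, so the argument applies uniformly to every element of $\mathcal{T}$ without a separate base case.
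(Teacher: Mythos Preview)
Your proof is correct and follows the same approach as the paper: apply the bijection $F$, use the uniqueness of the standard factorisation of Christoffel words to identify the image triples, and pull back via the bijectivity of $F$. The paper's proof is a one-line version of exactly this argument.
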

\begin{proof}
Using the bijection $F$, this results from uniqueness of the standard factorisation of Christoffel words.
\end{proof}

\section{Markoff Modules and Markoff Triples}

Consider the monoid homomorphism
$\rho : \{1,2,3\}^* \to \SL_2(\mathbb{Z})$ defined by
\[ \rho(1) = \left[\begin{array}{rr} 2&1\\ 1&1 \end{array}\right]   \quad
   \rho(2) = \left[\begin{array}{rr} 2&-1\\ -1&1 \end{array}\right] \quad
   \rho(3) = \left[\begin{array}{rr} 0&-1\\ 1&3 \end{array}\right] \]
Similar matrices (with reversed diagonals) appear in Cohn's study of Markoff forms (see \cite{C1}, \cite{C2}).

Let $\mathcal{S}$ be the set of strings for some bound quiver $(Q,I)$. Define a map $\nu:\mathcal{S}\to Q_0^*$ by
\[ \nu(w)= \left\{ \begin{array}{ll} s(a_1)s(a_2)\cdots s(a_n)t(a_n) & \mbox{if $w=a_1a_2\cdots a_n$} \\
                                     i                               & \mbox{if $w=\varepsilon_i$}
                   \end{array} \right. \]

When $Q_0 = \set{1,2,3}$, we define $\phi = \rho\nu$. Note that when the concatenation $vw$ of $v$ and $w$ is defined then $\phi(vw) = \phi(v)\phi(\varepsilon_i)^{-1}\phi(w)$ where $i$ is the end point of $v$ (and starting point of $w$).

\begin{lemma}[Fricke identities \cite{Fri}]
For every $A,B \in \SL_2(\mathbb{Z})$,
\begin{enumerate}
  \item $\Tr(A)^2 + \Tr(B)^2 + \Tr(AB)^2 = \Tr(A)\Tr(B)\Tr(AB) + \Tr(ABA^{-1}B^{-1}) + 2$
  \item $\Tr(AB^2) + \Tr(A) = \Tr(AB)\Tr(B)$
\end{enumerate}
\end{lemma}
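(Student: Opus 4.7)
The plan is to derive both identities from the Cayley--Hamilton theorem applied to matrices in $\SL_2(\mathbb{Z})$. For any $M \in \SL_2(\mathbb{Z})$, since $\det M = 1$, Cayley--Hamilton gives $M^2 - \Tr(M)M + I = 0$, equivalently $M^{-1} = \Tr(M)I - M$. This single observation will do almost all of the work.

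First I would prove (b), which is the easier identity and will be reused in (a). Multiplying the relation $B^2 = \Tr(B)B - I$ on the left by $A$ gives $AB^2 = \Tr(B)\,AB - A$, and taking the trace yields $\Tr(AB^2) = \Tr(B)\Tr(AB) - \Tr(A)$, which is exactly (b). By the same argument applied with the roles of $A$ and $B$ interchanged, one also obtains $\Tr(A^2B) = \Tr(A)\Tr(AB) - \Tr(B)$, which will be needed below.

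For (a) I would substitute $A^{-1} = \Tr(A)I - A$ and $B^{-1} = \Tr(B)I - B$ into the commutator and expand:
\[
ABA^{-1}B^{-1} = AB\bigl(\Tr(A)\Tr(B)I - \Tr(A)B - \Tr(B)A + AB\bigr).
\]
Taking traces term by term produces
\[
\Tr(ABA^{-1}B^{-1}) = \Tr(A)\Tr(B)\Tr(AB) - \Tr(A)\Tr(AB^2) - \Tr(B)\Tr(A^2B) + \Tr((AB)^2).
\]
Now I would apply (b) (and its variant) to replace $\Tr(AB^2)$ and $\Tr(A^2B)$, and use Cayley--Hamilton on $AB$ to write $\Tr((AB)^2) = \Tr(AB)^2 - 2$. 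After collecting terms, the two cross-products $\Tr(A)\Tr(B)\Tr(AB)$ combine with the leading one to leave a single $-\Tr(A)\Tr(B)\Tr(AB)$, and the constants $\Tr(A)^2$ and $\Tr(B)^2$ appear from the $-\Tr(A)$ and $-\Tr(B)$ tails of (b). Rearranging yields (a).

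No step is genuinely difficult; the only real obstacle is the bookkeeping in the expansion for (a), where four cross-terms must be correctly simplified via two applications of (b) and one of Cayley--Hamilton on $AB$. I would also note, for clarity, that although the statement restricts to $\SL_2(\mathbb{Z})$, the argument uses only $\det A = \det B = 1$, so both identities hold over any commutative ring for matrices in $\SL_2$.
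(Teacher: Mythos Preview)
Your proof is correct. The paper does not supply a proof of this lemma at all; it simply states the identities with a citation to Fricke~\cite{Fri}. Your Cayley--Hamilton argument is the standard modern derivation, and every step checks out: for part (a), after substituting $\Tr(AB^2) = \Tr(B)\Tr(AB) - \Tr(A)$, $\Tr(A^2B) = \Tr(A)\Tr(AB) - \Tr(B)$, and $\Tr((AB)^2) = \Tr(AB)^2 - 2$, the three copies of $\Tr(A)\Tr(B)\Tr(AB)$ indeed collapse to a single negative one, and the remaining terms give exactly the claimed identity. Your closing remark that only $\det A = \det B = 1$ is used (so the identities hold in $\SL_2$ over any commutative ring) is also accurate and worth noting.
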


\begin{proposition}
Let $(M(w_1),M(w_2),M(w_3))$ be a Markoff module triple.
\begin{enumerate}
  \item If $\mu_L(T) = (M(w_2),M(w'_1),M(w_3))$, then \[\phi(w'_1) = \phi(w_2)\phi(w_1)^{-1}\phi(w_2)\]
  \item If $\mu_R(T) = (M(w_1),M(w'_3),M(w_2))$, then \[\phi(w'_3) = \phi(w_2)\phi(w_3)^{-1}\phi(w_2)\]
\end{enumerate}
\end{proposition}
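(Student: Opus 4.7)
The plan is to combine the concatenation formula $\phi(vw)=\phi(v)\phi(\varepsilon_i)^{-1}\phi(w)$ (stated just above the Fricke identities) with the explicit description of $w'_1$ and $w'_3$ given in \cref{mmmtR2}. I will treat part (a) in detail; part (b) follows by the same argument applied on the opposite side of $w_2$.

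For (a), \cref{mmmtR2} supplies a string $v_1$ such that $w_2=w_1 v_1$ and $w'_1=w_2 v_1$ are both valid concatenations. Let $i$ denote the endpoint of $w_1$, so that $v_1$ starts at $i$; validity of the second concatenation then forces $w_2$ to end at $i$ as well, which in turn forces $v_1$ to end at $i$ (it is a closed walk at $i$). With this bookkeeping in hand, the concatenation formula applied to both decompositions gives
\[ \phi(w_2)=\phi(w_1)\phi(\varepsilon_i)^{-1}\phi(v_1) \quad\text{and}\quad \phi(w'_1)=\phi(w_2)\phi(\varepsilon_i)^{-1}\phi(v_1). \]

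Since $\rho$ takes values in $\SL_2(\mathbb{Z})$, every $\phi(w)$ is invertible; in particular $\phi(w_1)$ is. The first equation therefore yields $\phi(\varepsilon_i)^{-1}\phi(v_1)=\phi(w_1)^{-1}\phi(w_2)$, and plugging this into the second produces $\phi(w'_1)=\phi(w_2)\phi(w_1)^{-1}\phi(w_2)$, which is the stated identity.

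For (b), \cref{mmmtR2} similarly yields $w_2=w_3 u_1$ and $w'_3=w_2 u_1$ with $u_1$ a closed walk at the endpoint $j$ of $w_3$ (and of $w_2$); the same two applications of the concatenation formula, followed by elimination of $\phi(u_1)$, give $\phi(w'_3)=\phi(w_2)\phi(w_3)^{-1}\phi(w_2)$. The main (very mild) obstacle is only the bookkeeping of source/target vertices needed to justify that the two concatenations share the same basepoint $\varepsilon_i$, so that the factors $\phi(\varepsilon_i)$ cancel cleanly; no calculation with the specific matrices $\rho(1),\rho(2),\rho(3)$ is required.
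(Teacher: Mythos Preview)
Your proof is correct and follows essentially the same route as the paper's: invoke \cref{mmmtR2} to write $w_2=w_1v_1$ and $w'_1=w_2v_1$, apply the concatenation formula to each, and eliminate $\phi(\varepsilon_i)^{-1}\phi(v_1)$. Your explicit justification that the endpoint of $w_2$ coincides with the endpoint $i$ of $w_1$ (so the same $\varepsilon_i$ appears in both formulas) is a point the paper leaves implicit.
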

\begin{proof}By \cref{mmmtR2}, $w_2=w_1v$ and $w'_1 = w_2v$ for some string $v$. From the first equality we get
\[ \phi(w_2) = \phi(w_1)\phi(\varepsilon_i)^{-1}\phi(v) \]
where $i$ is the end point of $w_1$. Hence
\[ \phi(w_1)^{-1}\phi(w_2) = \phi(\varepsilon_i)^{-1}\phi(v) \]
Now by using the second equality,
\[ \phi(w'_1) = \phi(w_2)\phi(\varepsilon_i)^{-1}\phi(v) = \phi(w_2)\phi(w_1)^{-1}\phi(w_2)\]
The proof of the second statement is similar.
\end{proof}

\begin{corollary}\label{mmmt12}
Let $(M(w_1),M(w_2),M(w_3))$ be a Markoff module triple.
The matrices $\phi(w_i)$ satisfy $\tfrac{1}{3}\Tr\phi(w_i) = \phi(w_i)_{12}$.
\end{corollary}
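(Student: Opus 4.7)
The plan is to prove the claim by induction on the binary tree $\mathcal{T}(T_0)$ of Markoff module triples, leveraging the mutation formulas from the preceding proposition.

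For the base case I would compute directly: since $\nu(w_1) = 1$, $\nu(w_3) = 121$, and $\nu(w_2) = 1213121$, the matrices $\phi(w_1) = \rho(1)$, $\phi(w_3) = \rho(1)\rho(2)\rho(1)$, and $\phi(w_2) = \phi(w_3)\rho(3)\phi(w_3)$ have $(1,2)$-entries $1,\,2,\,5$ and traces $3,\,6,\,15$ respectively, which confirms the claim on $T_0$.

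For the inductive step, suppose the property holds on $T = (M(w_1), M(w_2), M(w_3))$. Two of the three entries of $\mu_L(T)$ and $\mu_R(T)$ are inherited from $T$, so only the new entries require verification. By the preceding proposition these new matrices are $\phi(w_2)\phi(w_1)^{-1}\phi(w_2)$ and $\phi(w_2)\phi(w_3)^{-1}\phi(w_2)$, and the induction reduces to the purely algebraic claim: \emph{if $A, B \in \SL_2(\mathbb{Z})$ each satisfy $\Tr(\cdot) = 3(\cdot)_{12}$, then so does $BA^{-1}B$.}

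To prove that claim I would establish two parallel formulas. For the trace, Fricke identity (b) applied to $A^{-1}$ and $B$, together with the Cayley--Hamilton consequences $\Tr(A^{-1}) = \Tr(A)$ and $\Tr(A^{-1}B) = \Tr(AB^{-1})$, yields
\[ \Tr(BA^{-1}B) = \Tr(AB^{-1})\Tr(B) - \Tr(A). \]
For the $(1,2)$ entry, I would expand $BA^{-1}B = \Tr(A)\, B^2 - BAB$ via $A^{-1} = \Tr(A) I - A$, simplify $(B^2)_{12} = \Tr(B)\, B_{12}$, and combine the elementary bilinear identity $(XY + YX)_{12} = X_{12}\Tr(Y) + Y_{12}\Tr(X)$ (applied with $X = BA$, $Y = B$) with $(B^2 A)_{12} = \Tr(B)(BA)_{12} - A_{12}$ to reduce $(BAB)_{12}$. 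The outcome should be
\[ (BA^{-1}B)_{12} = B_{12}\Tr(AB^{-1}) - A_{12}, \]
which is exactly one third of the trace formula once $\Tr(A) = 3A_{12}$ and $\Tr(B) = 3B_{12}$ are substituted. The hard part will be obtaining this parallel formula for $(BAB)_{12}$: a brute expansion is messy, and the symmetric identity $(XY + YX)_{12} = X_{12}\Tr(Y) + Y_{12}\Tr(X)$ is the key bridge that makes the $(1,2)$-entry calculation mirror Fricke's trace identity and allows the induction to close.
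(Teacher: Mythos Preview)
Your proposal is correct and follows exactly the paper's approach: induction on the tree $\mathcal{T}(T_0)$, reducing the inductive step via the preceding proposition to the algebraic claim that $\Tr(A)=3A_{12}$ and $\Tr(B)=3B_{12}$ imply $\Tr(BA^{-1}B)=3(BA^{-1}B)_{12}$. The paper simply states this algebraic fact and leaves it as a routine verification; your plan via Cayley--Hamilton and the identity $(XY+YX)_{12}=X_{12}\Tr(Y)+Y_{12}\Tr(X)$ is a valid (if somewhat elaborate) way to carry it out, though a direct entrywise expansion using $\det B=1$ is also short.
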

\begin{proof} One can easily verify that for every $A,B \in \SL_2(\mathbb{Z})$ if $\tfrac{1}{3}\Tr(A) = A_{12}$ and $\tfrac{1}{3}\Tr(B) = B_{12}$
then $\tfrac{1}{3}\Tr(AB^{-1}A) = (AB^{-1}A)_{12}$.
\end{proof}

\begin{proposition}\label{mmmtMult}
If $(M(w_1),M(w_2),M(w_3))$ is a Markoff module triple, then \[\phi(w_2)=\phi(w_1)\phi(w_3)\]
\end{proposition}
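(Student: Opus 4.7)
The plan is to prove this by structural induction on the tree $\mathcal{T}$ rooted at the initial triple $T_0$.

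For the base case, I would verify the identity directly for $T_0 = (M(w_1), M(w_2), M(w_3))$ with $w_1 = \varepsilon_1$, $w_2 = \alpha^{-1}\gamma\beta\delta^{-1}\alpha^{-1}\gamma$ and $w_3 = \alpha^{-1}\gamma$. Reading the vertex sequences gives $\nu(w_1)=1$, $\nu(w_3)=121$ and $\nu(w_2)=1213121$. The crucial observation is that $\rho(2)\rho(1)\rho(3) = \mathbb{I}$, so $\phi(w_1)\phi(w_3) = \rho(1)\cdot\rho(1)\rho(2)\rho(1)$ and $\phi(w_2) = \rho(1)\rho(2)\rho(1)\rho(3)\rho(1)\rho(2)\rho(1) = \rho(1)\cdot\rho(1)\rho(2)\rho(1)$, and these agree. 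This is the only point where the specific matrices $\rho(1),\rho(2),\rho(3)$ enter the argument, so it is essentially just a calculation.

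For the inductive step, I would use the preceding proposition, which gives $\phi(w'_1) = \phi(w_2)\phi(w_1)^{-1}\phi(w_2)$ and $\phi(w'_3) = \phi(w_2)\phi(w_3)^{-1}\phi(w_2)$. Assume the identity $\phi(w_2) = \phi(w_1)\phi(w_3)$ holds for some Markoff module triple $T = (M(w_1),M(w_2),M(w_3))$. For $\mu_L(T) = (M(w_2),M(w'_1),M(w_3))$, I need to check $\phi(w'_1) = \phi(w_2)\phi(w_3)$, which follows immediately:
\[ \phi(w'_1) = \phi(w_2)\phi(w_1)^{-1}\phi(w_2) = \phi(w_2)\phi(w_1)^{-1}\phi(w_1)\phi(w_3) = \phi(w_2)\phi(w_3). \]
For $\mu_R(T) = (M(w_1),M(w'_3),M(w_2))$, I need $\phi(w'_3) = \phi(w_1)\phi(w_2)$. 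The inductive hypothesis yields $\phi(w_3)^{-1} = \phi(w_2)^{-1}\phi(w_1)$, whence
\[ \phi(w'_3) = \phi(w_2)\phi(w_3)^{-1}\phi(w_2) = \phi(w_2)\phi(w_2)^{-1}\phi(w_1)\phi(w_2) = \phi(w_1)\phi(w_2). \]

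The only real obstacle is a careful base-case computation, and the remaining work is formal: since every element of $\mathcal{T}$ is reached from $T_0$ by a finite composition of $\mu_L$ and $\mu_R$, the induction covers all Markoff module triples.
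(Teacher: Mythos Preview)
Your proof is correct and follows essentially the same approach as the paper: structural induction on $\mathcal{T}$, with the inductive step handled via the identity $\phi(w'_1)=\phi(w_2)\phi(w_1)^{-1}\phi(w_2)$ (and its dual) from the preceding proposition. The paper merely asserts the base case as a direct calculation and treats $\mu_R$ by symmetry, whereas you spell out both; the observation $\rho(2)\rho(1)\rho(3)=\mathbb{I}$ is a clean way to do the base-case check.
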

\begin{proof}
For the initial triple, the formula can be verified by a simple calculation. Now suppose the proposition holds for some triple $T = (M(w_1),M(w_2),M(w_3))$, so that $\phi(w_2)=\phi(w_1)\phi(w_3)$. Let $\mu_L(T) = (M(w_2),M(w'_1),M(w_3))$, we then have \[ \phi(w'_1) = \phi(w_2)\phi(w_1)^{-1}\phi(w_2) = \phi(w_2)\phi(w_1)^{-1}\phi(w_1)\phi(w_3) = \phi(w_2)\phi(w_3)\]
The same argument can be used for $\mu_R(T)$.
\end{proof}

\begin{corollary}\label{mmmtPos}
Let $(M(w_1),M(w_2),M(w_3))$ be a Markoff module triple. The matrices $\phi(w_i)$ have strictly positive entries.
\end{corollary}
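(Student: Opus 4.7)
The plan is to argue by induction on the depth of the triple in the binary tree $\mathcal{T}$, using \cref{mmmtMult} to reduce the claim to the trivial observation that the product of two matrices with strictly positive entries has strictly positive entries.

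For the base case, I would evaluate $\phi$ on the strings defining the initial triple $T_0$. With $\nu(w_1)=1$ and $\nu(w_3)=121$, this gives $\phi(w_1)=\rho(1)=\left[\begin{smallmatrix}2&1\\1&1\end{smallmatrix}\right]$ and $\phi(w_3)=\rho(1)\rho(2)\rho(1)=\left[\begin{smallmatrix}5&2\\2&1\end{smallmatrix}\right]$, both with strictly positive entries. Then \cref{mmmtMult} yields $\phi(w_2)=\phi(w_1)\phi(w_3)=\left[\begin{smallmatrix}12&5\\7&3\end{smallmatrix}\right]$, which is again positive.

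For the inductive step, assume the claim for $T=(M(w_1),M(w_2),M(w_3))$ and consider $\mu_L(T)=(M(w_2),M(w'_1),M(w_3))$. The outer entries of this new triple are positive by the inductive hypothesis, and applying \cref{mmmtMult} to $\mu_L(T)$ itself gives $\phi(w'_1)=\phi(w_2)\phi(w_3)$, a product of two matrices with strictly positive entries, hence strictly positive. The case of $\mu_R(T)=(M(w_1),M(w'_3),M(w_2))$ is symmetric, yielding $\phi(w'_3)=\phi(w_1)\phi(w_2)$. No real obstacle arises once \cref{mmmtMult} is available: the argument is a one-line propagation statement combined with the base-case computation.
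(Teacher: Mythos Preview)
Your proof is correct and follows essentially the same approach as the paper: induction on the tree, with the base case checked directly and the inductive step handled via \cref{mmmtMult} together with the fact that the outer modules of a non-initial triple already appear in the antecedent triple. Your explicit base-case computations of $\phi(w_1)$, $\phi(w_3)$, and $\phi(w_2)$ are accurate.
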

\begin{proof}
True for the initial triple. Proceed by induction using \cref{mmmtMult} and 
noting that if $(M(w_1),M(w_2),M(w_3))$ is non-initial, then $M(w_1)$ and $M(w_3)$ appear in the antecedent triple.
\end{proof}

\begin{proposition}\label{mmmtTri}
If $(M(w_1),M(w_2),M(w_3))$ is a Markoff module triple, then
\[ \left( \tfrac{1}{3}\Tr\phi(w_1),\, \tfrac{1}{3}\Tr\phi(w_2),\, \tfrac{1}{3}\Tr\phi(w_3) \right) \]
is a proper Markoff triple.
\end{proposition}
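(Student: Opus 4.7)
The plan is to show that the assignment
\[\Psi:(M(w_1),M(w_2),M(w_3))\mapsto \bigl(\tfrac{1}{3}\Tr\phi(w_1),\,\tfrac{1}{3}\Tr\phi(w_2),\,\tfrac{1}{3}\Tr\phi(w_3)\bigr)\]
is a well-defined map $\mathcal{T}\to\mathcal{M}$ by proving it is a binary tree homomorphism whose value at the root $T_0$ is $(1,5,2)$. Since $\mathcal{M}$ is generated from $(1,5,2)$ by iterating $m_L$ and $m_R$, and since $\mathcal{M}$ coincides with the set of proper Markoff triples, this will give the result. Integrality of each coordinate is automatic from \cref{mmmt12} (the entry $\phi(w_i)_{12}$ is an integer since $\phi(w_i)\in\SL_2(\mathbb{Z})$), and positivity is immediate from \cref{mmmtPos}.

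First I would verify the base case by direct matrix computation. Using the definitions, $\nu(w_1)=1$, $\nu(w_3)=121$, so $\phi(w_1)=\rho(1)$ and $\phi(w_3)=\rho(1)\rho(2)\rho(1)$; multiplying out gives $\tfrac{1}{3}\Tr\phi(w_1)=1$ and $\tfrac{1}{3}\Tr\phi(w_3)=2$, and then $\phi(w_2)=\phi(w_1)\phi(w_3)$ by \cref{mmmtMult} gives $\tfrac{1}{3}\Tr\phi(w_2)=5$, so $\Psi(T_0)=(1,5,2)$.

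The main step is the inductive one: showing $\Psi\circ\mu_R=m_R\circ\Psi$ (and similarly for $\mu_L$). Writing $a_i=\tfrac{1}{3}\Tr\phi(w_i)$ and $\mu_R(T)=(M(w_1),M(w'_3),M(w_2))$, the earlier proposition gives $\phi(w'_3)=\phi(w_2)\phi(w_3)^{-1}\phi(w_2)$. The key simplification is that, by \cref{mmmtMult},
\[\phi(w'_3)=\phi(w_1)\phi(w_3)\phi(w_3)^{-1}\phi(w_1)\phi(w_3)=\phi(w_1)^{2}\phi(w_3),\]
which is now in a form where Fricke identity (b) with $A=\phi(w_3)$, $B=\phi(w_1)$ applies. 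Combined with the cyclicity of trace and $\Tr(\phi(w_1)\phi(w_3))=\Tr\phi(w_2)$, this yields
\[\Tr\phi(w'_3)=\Tr\phi(w_1)\Tr\phi(w_2)-\Tr\phi(w_3),\]
and dividing by $3$ gives $\tfrac{1}{3}\Tr\phi(w'_3)=3a_1a_2-a_3$. Hence $\Psi(\mu_R(T))=(a_1,3a_1a_2-a_3,a_2)=m_R(\Psi(T))$. The argument for $\mu_L$ is identical after noting that $\phi(w'_1)=\phi(w_2)\phi(w_1)^{-1}\phi(w_2)=\phi(w_1)\phi(w_3)^{2}$, and applying Fricke (b) with $A=\phi(w_1)$, $B=\phi(w_3)$.

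The anticipated obstacle is simply packaging the Fricke identity cleanly — the trick is recognising that the right-hand side of the mutation formula collapses, via multiplicativity, into a word of the form $AB^2$ (up to a cyclic shift) so that Fricke (b) can be invoked directly; once this is spotted, both the Markoff recurrence and the identification of the root are routine.
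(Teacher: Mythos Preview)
Your argument is correct, but it takes a different route from the paper's own proof of this proposition. The paper verifies the Markoff equation directly via the \emph{first} Fricke identity: from the inductive hypothesis it extracts $\Tr(ABA^{-1}B^{-1})=-2$ for $A=\phi(w_1)$, $B=\phi(w_3)$, observes that the commutator is unchanged after mutation, and then reapplies Fricke~(a) to the new triple; properness is handled separately by the inequality $\tfrac{1}{3}\Tr(A')>\tfrac{1}{3}\Tr(B)+\tfrac{1}{3}\Tr(C)$ coming from \cref{mmmt12,mmmtPos}. You instead show that $\Psi$ intertwines $\mu_{L/R}$ with $m_{L/R}$ using only the \emph{second} Fricke identity, and conclude by noting that $\mathcal{M}$ is by definition the $\{m_L,m_R\}$-orbit of $(1,5,2)$ and coincides with the set of proper Markoff triples. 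This is exactly the computation the paper carries out in the proof of the \emph{next} theorem (the one establishing that $\Phi$ is a binary tree isomorphism), so your approach effectively merges the two results into one argument. The payoff is economy: you never need Fricke~(a) or the commutator trace, and properness comes for free from the description of $\mathcal{M}$. The paper's separation has the minor expository advantage that the Markoff equation is seen to hold for an intrinsic reason (the commutator invariant), independent of the tree structure.
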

\begin{proof}
Explicit calculation shows that the initial triple yields the proper Markoff triple $(1,5,2)$. Suppose that the statement is true for some
 $T = (M(w_1),M(w_2),M(w_3))$. Let $A=\phi(w_1)$, $B=\phi(w_3)$ and $C=\phi(w_2)$. By the Fricke identity we have
\[ \Tr(A)^2 + \Tr(B)^2 + \Tr(AB)^2 = \Tr(A)\Tr(B)\Tr(AB) + \Tr(ABA^{-1}B^{-1})+2\]
and since $C=AB$ by \cref{mmmtMult},
\[ \Tr(A)^2 + \Tr(B)^2 + \Tr(C)^2 = \Tr(A)\Tr(B)\Tr(C) + \Tr(ABA^{-1}B^{-1})+2 \]
Then, our assumption that $\left( \tfrac{1}{3}\Tr(A),\, \tfrac{1}{3}\Tr(B),\, \tfrac{1}{3}\Tr(C) \right)$ is a Markoff triple implies
\[ \Tr(A)^2 + \Tr(B)^2 + \Tr(C)^2 = \Tr(A)\Tr(B)\Tr(C) \]
Hence $\Tr(ABA^{-1}B^{-1}) = -2$.

Let $(M(w_2),M(w'_1),M(w_3)) = \mu_L(T)$ and $A' = \phi(w'_1)$. We then have
\[ \Tr(C)^2 + \Tr(B)^2 + \Tr(CB)^2 = \Tr(C)\Tr(B)\Tr(CB)+\Tr(CBC^{-1}B^{-1})+2 \]
but since $A'=CB$,
\[ \Tr(C)^2 + \Tr(B)^2 + \Tr(A')^2 = \Tr(C)\Tr(B)\Tr(A')+\Tr(CBC^{-1}B^{-1})+2 \]
Moreover, from $C=AB$, we get $B=A^{-1}C$ and so
\[ CBC^{-1}B^{-1} = CA^{-1}CC^{-1}B^{-1} = ABA^{-1}B^{-1} \]
Thus $\Tr(CBC^{-1}B^{-1}) = \Tr(ABA^{-1}B^{-1}) = -2$. We conclude that
\[\left( \tfrac{1}{3}\Tr(C),\, \tfrac{1}{3}\Tr(A'),\, \tfrac{1}{3}\Tr(B) \right)\]
is a Markoff triple. We still need to show that it is proper. The inductive hypothesis implies that $\tfrac{1}{3}\Tr(B) \neq \tfrac{1}{3}\Tr(C)$. Using \cref{mmmt12,mmmtPos}, we get
\begin{align*}
  \tfrac{1}{3}\Tr(A') = A'_{12} &= C_{11}B_{12}+C_{12}B_{22}  \\
                                &> B_{12}+C_{12}              \\
                                &= \tfrac{1}{3}\Tr(B) + \tfrac{1}{3}\Tr(C)
\end{align*}
from which we deduce
$\tfrac{1}{3}\Tr(A') > \tfrac{1}{3}\Tr(B)$ and $\tfrac{1}{3}\Tr(A') > \tfrac{1}{3}\Tr(C)$.
\end{proof}

Let $\mathcal{T}$ be the set of Markoff module triples. From \cref{mmmtTri}, we get a map $\Phi:\mathcal{T}\to\mathcal{M}$ defined on a triple $T = (M(w_1),M(w_2),M(w_3))$ by
\[ \Phi(T) = \left( \tfrac{1}{3}\Tr\phi(w_1),\, \tfrac{1}{3}\Tr\phi(w_2),\, \tfrac{1}{3}\Tr\phi(w_3) \right) \]
\begin{theorem}
The following diagrams commute
\[ \xymatrix{
\mathcal{T} \ar[r]^{\Phi} \ar[d]_{\mu_{L}} & \mathcal{M} \ar[d]^{m_L}  & & \mathcal{T} \ar[r]^{\Phi} \ar[d]_{\mu_{R}} & \mathcal{M} \ar[d]^{m_R} \\
\mathcal{T} \ar[r]^{\Phi}                   & \mathcal{M}              & & \mathcal{T} \ar[r]^{\Phi}                  & \mathcal{M}              \\
} \]
Moreover, $\Phi$ is a binary tree isomorphism.
\end{theorem}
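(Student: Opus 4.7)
The plan is to verify the two commutative squares by direct trace computation, using the second Fricke identity together with \cref{mmmtMult}, and then to upgrade the resulting binary tree homomorphism $\Phi$ to an isomorphism via \cref{btiso}.

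Fix a Markoff module triple $T=(M(w_1),M(w_2),M(w_3))$ and set $A=\phi(w_1)$, $B=\phi(w_3)$, $C=\phi(w_2)$, so that $\Phi(T)=(a,b,c)$ with $a=\tfrac13\Tr A$, $b=\tfrac13\Tr C$, $c=\tfrac13\Tr B$. The proposition preceding \cref{mmmt12} gives $\phi(w'_3)=CB^{-1}C$ when $\mu_R(T)=(M(w_1),M(w'_3),M(w_2))$. Substituting $C=AB$ from \cref{mmmtMult} produces the key simplification $\phi(w'_3)=(AB)B^{-1}(AB)=A^{2}B$, eliminating the inverse entirely. The second Fricke identity $\Tr(XY^{2})+\Tr(X)=\Tr(XY)\Tr(Y)$ applied with $(X,Y)=(B,A)$ yields
\[ \Tr(A^{2}B)=\Tr(BA)\Tr(A)-\Tr(B)=\Tr(C)\Tr(A)-\Tr(B), \]
and dividing by three gives $\tfrac13\Tr\phi(w'_3)=3ab-c$. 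Hence $\Phi(\mu_R(T))=(a,\,3ab-c,\,b)=m_R(\Phi(T))$. The left square is treated symmetrically: $\phi(w'_1)=CA^{-1}C=(AB)A^{-1}(AB)=AB^{2}$, and Fricke (b) applied with $(X,Y)=(A,B)$ gives $\Tr(AB^{2})=\Tr(B)\Tr(C)-\Tr(A)$, whence $\tfrac13\Tr\phi(w'_1)=3bc-a$, which is precisely the middle entry of $m_L(a,b,c)$.

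With both squares established, $\Phi$ is a binary tree homomorphism from $(\mathcal{T},\mu_L,\mu_R)$ to $(\mathcal{M},m_L,m_R)$. By \cref{btiso}, it is an isomorphism as soon as it sends root to root; but the base case of \cref{mmmtTri} has already recorded $\Phi(T_0)=(1,5,2)$, which is by definition the root of $\mathcal{M}$, finishing the proof.

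There is no real obstacle here: the only idea required is the rewriting $\phi(w'_3)=A^{2}B$ and $\phi(w'_1)=AB^{2}$ via the relation $C=AB$, which replaces a computation involving $A^{-1}$ and $B^{-1}$ (and would otherwise force an appeal to the commutator version of Fricke, as in \cref{mmmtTri}) by a one-line application of the multiplicative Fricke identity.
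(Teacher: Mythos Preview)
Your proof is correct and follows essentially the same approach as the paper: both reduce $\phi(w'_1)$ to $AB^{2}$ (the paper does this directly via \cref{mmmtMult} applied to $\mu_L(T)$, you via $CA^{-1}C$ followed by $C=AB$), apply the second Fricke identity, and then invoke \cref{btiso} on the root. The only cosmetic difference is that you spell out the $\mu_R$ case explicitly while the paper leaves it as ``similarly''.
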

\begin{proof}
Let $T = (M(w_1),M(w_2),M(w_3))\in\mathcal{T}$. Then $\Phi(T) = (a,b,c)$ where
\[ a=\tfrac{1}{3}\Tr\phi(w_1),\quad b=\tfrac{1}{3}\Tr\phi(w_2) \quad\text{and}\quad c=\tfrac{1}{3}\Tr\phi(w_3)\]
Since $m_L\Phi(T) = (b,3bc-a,c)$ and
\[\mu_{L}(T) = (M(w_2),M(w'_1),M(w_3))\] it suffices to prove that $\tfrac{1}{3}\Tr(\phi(w'_1)) = 3bc-a$.
By \cref{mmmtMult}, $\phi(w'_1) = \phi(w_2)\phi(w_3)$ and $\phi(w_2) = \phi(w_1)\phi(w_3)$ so
\[\phi(w'_1) = \phi(w_1)\phi(w_3)\phi(w_3)\]
thus, by the second Fricke identity,
\begin{align*}
  \Tr\phi(w'_1) &= \Tr(\phi(w_1)\phi(w_3))\Tr\phi(w_3) - \Tr\phi(w_1) \\
                &= (\Tr\phi(w_2))(\Tr\phi(w_3)) - \Tr\phi(w_1) \\
                &= 3c \cdot 3b - 3a \\
                &= 3 (3bc-a)
\end{align*}
The second case is proved similarly.

That $\Phi$ is an isomorphism is a consequence of \cref{btiso} since
$\Phi(\text{root } \mathcal{T}) = (1,5,2) = \text{root }\mathcal{M}$.
\end{proof}

\begin{corollary}
The uniqueness conjecture for Markoff numbers is equivalent to the injectivity of the map $M(w)\mapsto \tfrac{1}{3}\Tr\phi(w)$ where $M(w)$ is a  proper Markoff module.
\end{corollary}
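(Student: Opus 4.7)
The plan is to combine the binary tree isomorphism $\Phi \colon \mathcal{T} \to \mathcal{M}$ from the preceding theorem with \cref{mmtun} so that the map $\mu \colon M(w) \mapsto \tfrac{1}{3}\Tr\phi(w)$, defined on proper Markoff modules, becomes, up to bijections on either side, the ``largest term'' projection on $\mathcal{M}$. The uniqueness conjecture then translates directly into the injectivity of this projection, and hence of $\mu$.

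First I would let $\mathcal{P}$ denote the set of proper Markoff modules. By \cref{mmtun}, the assignment $(M(w_1),M(w_2),M(w_3)) \mapsto M(w_2)$ defines a bijection $\pi_{\mathcal{T}} \colon \mathcal{T} \to \mathcal{P}$. On the Markoff side, every $(a,b,c) \in \mathcal{M}$ satisfies $a < b$ and $c < b$, so the middle-term map $\pi_{\mathcal{M}} \colon \mathcal{M} \to \mathbb{Z}$, $(a,b,c) \mapsto b$, coincides with the largest-term map. The definition of $\Phi$ immediately yields $\pi_{\mathcal{M}} \circ \Phi = \mu \circ \pi_{\mathcal{T}}$, and because $\Phi$ and $\pi_{\mathcal{T}}$ are bijections, $\mu$ is injective if and only if $\pi_{\mathcal{M}}$ is.

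It then remains to match the injectivity of $\pi_{\mathcal{M}}$ with the full uniqueness conjecture. Up to permutation, every Markoff triple is either proper (hence in $\mathcal{M}$) or equal to $(1,1,1)$ or $(1,1,2)$, whose largest entries are $1$ and $2$. Since the largest term of every triple in $\mathcal{M}$ is at least $5$ (the middle entry of the root $(1,5,2)$, which is strictly increased by $m_L$ and $m_R$), the Markoff numbers $1$ and $2$ never arise as the largest term of a proper Markoff triple, so the conjecture for them is automatic; for every other Markoff number it is exactly the injectivity of $\pi_{\mathcal{M}}$. Given $\Phi$ and \cref{mmtun}, this argument is essentially a diagram chase and should present no serious difficulty; the only step that calls for attention, though it is routine, is this final reduction from all Markoff triples to those in $\mathcal{M}$.
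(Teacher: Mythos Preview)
Your proposal is correct and follows essentially the same approach as the paper's proof: both directions rely on the bijectivity of $\Phi$, on \cref{mmtun} to pass between proper Markoff modules and Markoff module triples, and on disposing of the two non-proper Markoff triples separately. Your commutative-square formulation $\pi_{\mathcal{M}}\circ\Phi=\mu\circ\pi_{\mathcal{T}}$ is a slightly cleaner packaging of the same element-by-element argument the paper gives.
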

\begin{proof}Suppose that the uniqueness conjecture holds. Let $M(w_1)$ and $M(w_2)$ be two proper Markoff modules such that $\Tr\phi(w_1) = \Tr\phi(w_2)$. Since $M(w_1)$ and $M(w_2)$ are proper, there exist $T_1,T_2\in\mathcal{T}$ such that $M(w_1)$ and $M(w_2)$ are the middle terms of $T_1$ and $T_2$ respectively. Let $\Phi(T_1)=(a_1,b_1,c_1)$ and $\Phi(T_2)=(a_2,b_2,c_2)$.
We have $b_1 = \tfrac{1}{3}\Tr\phi(w_1) = \tfrac{1}{3}\Tr\phi(w_2) = b_2$ and we supposed that every Markoff triple is uniquely determined by its largest term, thus $\Phi(T_1)=\Phi(T_2)$ and consequently $M(w_1)=M(w_2)$.

Conversely, it is known that the Markoff triples $(1,1,1)$ and $(1,2,1)$ are uniquely determined by their largest terms. Let $m_1=(a_1,b_1,c_1)$ and $m_2=(a_2,b_2,c_2)$ be two proper Markoff triples such that $b_1=b_2$. Since $\Phi$ is bijective, there are Markoff module triples $T_1 = (M(w_1),M(w_2),M(w_3))$ and $T_2 = (M(v_1),M(v_2),M(v_3))$ such that $\Phi(T_1)=m_1$ and $\Phi(T_2)=m_2$. In particular, $\tfrac{1}{3}\Tr\phi(w_2) = \tfrac{1}{3}\Tr\phi(v_2)$ and then by our assumption $M(w_2) = M(v_2)$. By \cref{mmtun}, $T_1=T_2$ hence $m_1=m_2$.
\end{proof}



\begin{thebibliography}{10}
\bibitem{Ba} A. Baragar,
{\em On the Unicity Conjecture for Markoff Numbers}, 
Canad. Math. Bull. 39 (1996), 3-9.

\bibitem{BLRS} J. Berstel, A. Lauve, C. Reutenauer, F. Saliola, 
{\em Combinatorics on Words: Christoffel words and repetitions in words},
American Mathematical Society and Centre de Recherches Math\'{e}matiques (2008).

\bibitem{BdL} J. Berstel, A. de Luca, 
{\em Sturmian words, Lyndon words and trees},
Theor. Comput. Sci. 178, 1-2 (1997), 171-203.

\bibitem{BL} J.-P. Borel, F. Laubie,
{\em Quelques mots sur la droite projective r\'{e}elle},
Journal de Th\'{e}orie des Nombres de Bordeaux 5 (1993), 23-51.

\bibitem{BMRRT} A.B. Buan, R.Marsh, M. Reineke, I. Reiten, G. Todorov,
{\em Tilting theory and cluster combinatorics},
Adv. Math. 204 (2006), 572-618.

\bibitem{BRS} Y. Bugeaud, C. Reutenauer, S. Siksek,
{\em A Sturmian sequence related to the uniqueness conjecture for Markoff numbers},
Theoretical Computer Science 410, 30-32 (2009), 2864-2869.

\bibitem{BR} M.C.R. Butler, C.M. Ringel,
{\em Auslander-Reiten sequences with few middle terms and applications to string algebras},
Communications in Algebra, 15 (1987), 145-179.

\bibitem{Bu} J.O. Button,
{\em The Uniqueness of the Prime Markoff Numbers},
J. London Math. Soc. (2) 58 (1998), 9-17.

\bibitem{Ch} E.B. Christoffel, 
{\em Observatio arithmetica},
Annali di Matematica 6 (1875), 145-152.

\bibitem{C1} H. Cohn,
{\em Approach to Markoff's Minimal Forms Through Modular Functions},
The Annals of Mathematics Second Series, Vol. 61, No. 1 (1955), 1-12.

\bibitem{C2} H. Cohn,
{\em Markoff forms and primitive words},
Mathematische Annalen 196 (1972), 8-22.

\bibitem{CB} W.W. Crawley-Boevey,
{\em Maps between representations of zero-relation algebras}, 
J. Algebra 126 (1989), 259-263.

\bibitem{Fri} R. Fricke,
{\em \"{U}ber die Theorie der automorphen Modulgruppen},
Nachr. Akad. Wiss. G\"{o}ttingen (1896), 91-101.

\bibitem{Fro} G.F. Frobenius,
{\em \"{U}ber die Markoffschen Zahlen},
Sitzungsberichte der K\"{o}niglich Preussischen Akadamie der Wissenschaften zu Berlin (1913), 458-487.

\bibitem{KS} H. Krause, M. Saor\'{i}n,
{\em On minimal approximations of modules},
Contemp. Math. 229 (1998), 227-236.

\bibitem{M} A.A. Markoff,
{\em Sur les formes quadratiques binaires ind\'{e}finies},
Math. Ann. 15 (1879), 381-496.

\bibitem{M2} A.A. Markoff,
{\em Sur les formes quadratiques binaires ind\'{e}finies} (second memoire),
Math. Ann. 17 (1880), 379-399.

\bibitem{R} C. Reutenauer,
{\em Christoffel words and Markoff triples},
Integers 9, no. 3 (2009), 327-332.

\bibitem{Z} Y. Zhang,
{\em Congruence and Uniqueness of Certain Markov Numbers},
Acta Arithmetica 128, no. 3 (2007), 295-301.

\end{thebibliography}
\end{document}